\newcommand{\newsection}[1]{\setcounter{equation}{0} \section{#1}}
\numberwithin{equation}{section}
\newtheorem{propn}{Proposition}[section]
\newtheorem{thm}[propn]{Theorem}
\newtheorem{lemma}[propn]{Lemma}
\newtheorem{cor}[propn]{Corollary}
\newtheorem{remark}[propn]{Remark}
\newtheorem*{thm*}{Theorem}
\theoremstyle{definition}
\newtheorem{defn}[propn]{Definition}
\newcommand{\Z}{\mathbb{Z}_+}
\newcommand{\T}{\mathcal{T}}
\newcommand{\D}{\mathbb{D}}
\DeclareMathOperator{\ran}{ran}
\newcommand{\clb}{\mathcal{B}}
\newcommand{\cld}{\mathcal{D}}
\newcommand{\cle}{\mathcal{E}}
\newcommand{\clh}{\mathcal{H}}
\newcommand{\clk}{\mathcal{K}}
\newcommand{\clq}{\mathcal{Q}}
\newcommand{\clr}{\mathcal{R}}
\newcommand{\z}{\bm{z}}
\newcommand{\w}{\bm{w}}
\newcommand{\NI}{\noindent}
\begin{document}
	
	\title[{$q$-commuting dilation}]{A generalization of Ando's dilation, and isometric dilations for a class of tuples of $q$-commuting contractions}
	
\author[Barik]{Sibaprasad Barik}
\address{Department of Mathematics, Ben-Gurion University of the Negev, P.O.B. 653, Be'er Sheva 8410501, Israel}
%Department of Mathematics, Indian Institute of Technology Bombay, Powai, Mumbai, 400076, 
%India}
\email{barik@post.bgu.ac.il, sibaprasadbarik00@gmail.com}
	
%	\author[Barik]{Sibaprasad Barik}
%	\address{Department of Mathematics, Indian Institute of Technology Bombay, Powai, Mumbai, 400076, India}
%	\email{sibaprasadbarik00@gmail.com}
	
	\author[Bisai]{Bappa Bisai}
\address{Mathematics group, Harish-Chandra Research Institute, HBNI, Chhatnag Road, Jhunsi, Allahabad, 211019, India.} \email{bappabisai@hri.res.in, bappa.bisai1234@gmail.com}
	
	\subjclass[2010]{ 47A20, 47A13, 47A56, 47B38, 46E22, 47B32,
32A70}
	\keywords{$Q$-commuting operators, $q$-commuting operators, Isometric dilation, Szeg\"o positivity, Brehmer positivity, Hardy space}
	\begin{abstract}
Given a bounded operator $Q$ on a Hilbert space $\clh$, a pair of bounded operators $(T_1,T_2)$ on $\mathcal{H}$ is said to be $Q$-commuting if one of the following holds:
	 \[
	 T_1T_2=QT_2T_1  \text{ or }T_1T_2=T_2QT_1 \text{ or }T_1T_2=T_2T_1Q.	 \]
We give an explicit construction of isometric dilations for pairs of $Q$-commuting contractions for unitary $Q$, which generalizes the isometric dilation of Ando \cite{An} for pairs of commuting contractions. In particular, for $Q=qI_{\clh}$, where $q$ is a complex number of modulus $1$, this gives, as a corollary, an explicit construction of isometric dilations for pairs of $q$-commuting contractions which are well studied. There is an extended notion of $q$-commutativity for general tuples of operators and it is known that isometric dilation does not hold, in general, for an $n$-tuple of $q$-commuting contractions, where $n\geq 3$. Generalizing the class of commuting contractions considered by  Brehmer \cite{Bre}, we construct a class of $n$-tuples of $q$-commuting contractions and find isometric dilations explicitly for the class.
	\end{abstract}

\maketitle

\section*{Notations}

\begin{list}{\quad}{}
%\item $\mathbb N$\quad \quad \quad\quad The set of all natural numbers.
\item $\mathbb Z_{+}$\quad \quad \quad\ The set of all non negative integers.

\item $\mathbb Z_{+}^n$\quad \quad \quad\ The set of all $n$-tuple of non negative integers.

\item $\mathbb{D}$ \quad \quad \quad\ \ Open unit disc in the complex plane
$\mathbb{C}$ with center at origin.

\item  $\mathbb{D}^n$ \; \quad \quad\  \ Open unit polydisc in $\mathbb{C}^n$.

\item  $\clh$, $\clk$ \; \quad\ \ Hilbert spaces.

\item $\clb(\clh)$ \quad \;\ \ The space of all bounded linear
operators on $\clh$.

%\item $H^2_{\cle}(\mathbb{D})$ \quad \ \  $\cle$-valued Hardy space on
%$\mathbb{D}$.

%\item $M_z$  \quad \, \quad \quad Multiplication operator by the coordinate function
%$z$.

%\item $H^\infty_{\clb(\cle)}(\D)$ \quad Set of $\clb(\cle)$-valued bounded analytic functions on
%$\mathbb{D}$.

\end{list}

All Hilbert spaces are assumed to be over the complex numbers.

\newsection{Introduction}

%Throughout this paper all Hilbert spaces considered are assumed to be complex and all operators are assumed to be bounded linear transformation. Given a Hilbert space $\mathcal{H}$, $\mathcal{B}(\mathcal{{H}})$ will always stands for the algebra of operators on $\mathcal{H}$.\\ 

\noindent One of the beautiful results in operator theory is the isometric dilation of a Hilbert space contraction (that is, an operator on a Hilbert space with norm not greater than 1) due to Sz.-Nagy \cite{Nagy}. Indeed, in 1953, Sz.-Nagy proved that for a contraction $T$ on a Hilbert space $\clh$, there exist a Hilbert space $\mathcal{K}$ $(\supset \mathcal{H})$ and an isometry $V$ on $\clk$ such that \[
	T^k=P_{\mathcal{H}}V^k|_{\mathcal{{H}}}
	\]
for all $k\in\Z$, where $P_{\mathcal{H}}$ denotes the orthogonal projection in $\clb(\clk)$ with range $\mathcal{H}$. A decade later, Ando \cite{An} generalized Sz.-Nagy's dilation result for pairs of commuting contractions, that is, he established that for a pair of commuting contractions $(T_1, T_2)$ on a Hilbert space $\mathcal{H}$, there exists a pair of commuting isometries $(V_1, V_2)$ on $\mathcal{K}$ ($\clk\supset\clh$) such that 
	\[
	T_1^{k_1}T_2^{k_2}=P_{\mathcal{H}}V_1^{k_1}V_2^{k_2}|_{\mathcal{{H}}} 
	\]
	for all $(k_1,k_2)\in\Z^2$. These dilation results of Sz.-Nagy and Ando are useful for analyzing the structures of bounded operators or pairs of bounded operators, respectively, on some Hilbert space. It is natural to ask whether we can extend Sz.-Nagy's dilation to arbitrary $n$-tuple of commuting contractions for $n\geq 3$. Producing some counterexample of a triple of commuting contractions, Parrott \cite{Par} showed that, for $n\geq3$, an $n$-tuple of commuting contractions does not have a dilation to an $n$-tuple of isometries, in general (also see \cite{CD}). There is growing literature on isometric dilation of different classes of $n$-tuples of commuting contractions considered by several authors. Some of the significant references are \cite{Bre}, \cite{MV}, \cite{CV1}, \cite{CV2}, \cite{Tim}, \cite{BDHS}, \cite{BDS}, \cite{BD} and \cite{NF}.
	
	Beyond the commutative setup, finding isometric dilations of non-commuting tuples of contractions is an active area of research. Isometric dilations of general non-commuting contractive tuples (that is, a non-commuting tuple $(T_1,\ldots,T_n)$ such that $\sum_{i=1}^nT_iT_i^*\leq I$, also known as row contractions) have been studied in \cite{P1} and \cite{P2}. The present paper deals with the isometric dilations of a special type of non-commuting tuples, known as the tuple of $q$-commuting operators, and defined as follows.
	\begin{defn}\label{qcommute}
		Suppose $q: \{1,\dots,n\}\times \{1,\dots,n\}\to \mathbb{T}$ is a function such that $q(i,i)=1$ and $q(i,j)=\overline{q(j,i)}$ for all $i,j=1,\dots, n$. An $n$-tuple of operators $(T_1,\dots,T_n)$ on a Hilbert space $\mathcal{H}$ is said to be $q$-\textit{commuting} if $T_iT_j=q(i,j)T_jT_i$ for all $i,j=1,\dots,n$. In particular, if $q(i,j)=-1$ for all $i,j=1,\dots,n$ and $i\neq j$, we call the tuple as \textit{anti-commuting} tuple.
	\end{defn}
	\NI These types of operator tuples have been seen to be useful in quantum theory and those have drawn the attention of many researchers in recent years. Trivially, every $n$-tuple of commuting operators are $q$-commuting, where $q(i,j)=1$ for all $i,j=1,\ldots,n$. For non-trivial examples of $q$-commuting operators, an interested reader is referred to \cite{KM}. For dilations and models of $q$-commuting row contractions see \cite{D} and \cite{BB}. For models of tuples of $q$-commuting isometries, one can see \cite{BS}.
	
	In \cite{S}, Sebesty\'en took a fascinating next step to extend the celebrated Ando's result for pairs of anti-commuting contractions. Indeed, by a commutant lifting approach, he proved that for a tuple of anti-commuting contractions $(T_1, T_2)$ on a Hilbert space $\mathcal{H}$, there is a Hilbert space $\mathcal{K}$ containing $\mathcal{H}$ and a pair of anti-commuting isometries $(V_1, V_2)$ on $\mathcal{K}$ such that 	\[
	T_1^{k_1}T_2^{k_2}=P_{\mathcal{H}}V_1^{k_1}V_2^{k_2}|_{\mathcal{{H}}} 
	\]
	for all $(k_1,k_2)\in\Z^2$.
	Recently, Keshari and Mallick \cite{KM} have generalized the result of Sebesty\'en by proving the existence of $q$-commuting isometric dilations for pairs of $q$-commuting contractions, and they have also done it through a commutant lifting approach. Mallick and Sumesh \cite{MS} improved Keshari and Mallick's $q$-commuting dilation result for pairs of $Q$-commuting contractions, where $Q$ is a unitary. Their definition of a pair of $Q$-commuting contractions is as follows.
	 
	 \begin{defn}\label{Q com}
	 Given $Q\in \mathcal{B}(\mathcal{H})$, a pair bounded operators $(T_1,T_2)$ on $\mathcal{H}$ is said to be $Q$-commuting if one of the following condition holds:
	 \[
	 T_1T_2=QT_2T_1 \text{ or }T_1T_2=T_2QT_1 \text{ or }T_1T_2=T_2T_1Q.	 \]
	 \end{defn}
\NI Examples of non trivial $Q$-commuting operators can be found in \cite{MS}. In \cite{MS}, the authors have shown that if $(T_1,T_2)$ is a pair contractions on a Hilbert space $\clh$ with the $Q$-commuting relation $T_1T_2=QT_2T_1$ (resp. $T_1T_2=T_2T_1Q$), then there exist a pair of isometries $(V_1,V_2)$ on some bigger Hilbert space $\clk$ with $V_1V_2=\widetilde{Q}V_2V_1$ (resp. $V_1V_2=V_2V_1\widetilde{Q}$) for some unitary $\widetilde{Q}$ which extends $Q$ and $(V_1,V_2)$ is a dilation of $(T_1, T_2)$ in the above sense. But, for the other case, that is, for $T_1T_2=T_2QT_1$, the existence of $\widetilde{Q}$-commuting isometric dilation is so far unknown. Also, there is no explicit construction of isometric dilations for a pair of $Q$-commuting contractions in the literature, even if for its well studied particular case of $q$-commuting pairs. In this article, we prove by an explicit construction that a pair of $Q$-commuting contractions can always be dilated to a pair of $\widetilde{Q}$-commuting isometries and further into a pair of $\widetilde{Q}$-commuting unitaries (see Theorem \ref{Qcommunidil}). Indeed, in Theorem \ref{Qcommdil}, for a pair of $Q$-commuting contractions $(T_1,T_2)$ on a Hilbert space $\mathcal{H}$, we explicitly construct a pair of $\widetilde{Q}$-commuting isometries $(V_1,V_2)$ on $\mathcal{K}$ $(\supset \mathcal{H})$ such that $(V_1,V_2)$ is a co-extension of $(T_1,T_2)$, that is, $V_i^*|_{\mathcal{H}}=T_i^*$ for $i=1,2$. From the $Q$-commutative case, as a corollary, we get an explicit construction of isometric dilations for $q$-commuting pairs. 

It is well known that, for $n\geq 3$, an $n$-tuple of $q$-commuting contractions does not possess an isometric dilation, in general, and this is due to the counterexample of Parrott \cite{Par} for commutative case. In this paper, we consider a class of $n$-tuples of $q$-commuting contractions which generalizes the class considered by Brehmer \cite{Bre} for the commutative setup and find their isometric dilations (see Theorem \ref{Bre-dil}). Moreover, we construct the dilation explicitly and see that the dilating tuples also lie in the same class. Here also, the dilations we find are co-extensions. In some particular case, our dilation result generalizes the isometric dilation result for a class of commuting tuples considered by Curto and Vasilescu \cite{CV1}.

Rest of the paper has two sections. Next section concerns with explicit construction of isometric dilations for pairs of $Q$-commuting contractions and consequently for $q$-commuting pairs. In the last section, we deal with general $n$-tuples ($n\geq 3$) of $q$-commuting contractions and construct isometric dilations explicitly for our class.
	
	\section{Dilation for pair of $Q$-commuting contractions}

\noindent This section is devoted to the construction of an explicit isometric dilation for a pair of a $Q$-commuting contractions, where $Q$ is a unitary operator. For this purpose we follow the line of construction of Ando \cite{An}.  Our main theorem of this section is as follows.

\begin{thm}\label{Qcommdil}
	Let $(T_1,T_2)$ be a  pair of $Q$-commuting contractions on a Hilbert space $\mathcal{{H}}$, where $Q\in\mathcal{B}(\mathcal{H})$ is a unitary. Then there exist a Hilbert space $\mathcal{K}$ containing $\mathcal{{H}}$, isometries $V_1,V_2 \in\mathcal{B}(\mathcal{K})$ and a unitary $\widetilde{Q}\in\mathcal{B}(\mathcal{K})$ such that $(V_1,V_2)$ is a pair of $\widetilde{Q}$-commuting isometries and 
	\[T_1^{k_1}T_2^{k_2}=P_{\mathcal{H}}V_1^{k_1}V_2^{k_2}|_{\mathcal{H}}\] 
	for all $k_1, k_2\in \Z$. Moreover, $\mathcal{H}$ is a reducing subspace for $\widetilde{Q}$ and $\widetilde{Q}|_{\mathcal{H}}=Q$.
\end{thm}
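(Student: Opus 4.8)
The plan is to mimic Ando's constructive dilation, with the commutation bookkeeping at the level of defect operators replaced by a $Q$-twisted version, the unitarity of $Q$ being exactly what keeps that bookkeeping isometric. Write $D_{T_i}=(I-T_i^*T_i)^{1/2}$ and $\cld_{T_i}=\overline{\ran D_{T_i}}$ for $i=1,2$. First I would set $\clk=\clh\oplus\ell^2(\Z,\cll)$ for a coefficient space $\cll$ assembled from $\dtone$ and $\cld_{T_2}$ (as in Ando's scheme), realise $V_1$ as the Sch\"affer-type minimal isometric dilation of $T_1$, so that $T_1^{k}=P_{\clh}V_1^{k}|_{\clh}$ and $V_1^*|_{\clh}=T_1^*$ hold automatically, and then reduce the whole theorem to producing on the same $\clk$ an isometry $V_2$ that co-extends $T_2$ and satisfies the twisted relation $V_1V_2=\wt{Q}\,V_2V_1$ for a unitary $\wt{Q}$ extending $Q$.

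The heart of the matter is a single norm identity that powers the definition of the twisting unitary on the coefficient space. For the relation $T_1T_2=QT_2T_1$ one computes, for every $h\in\clh$,
\[
\|D_{T_2}h\|^2+\|D_{T_1}T_2h\|^2=\|h\|^2-\|T_1T_2h\|^2=\|h\|^2-\|T_2T_1h\|^2=\|D_{T_1}h\|^2+\|D_{T_2}T_1h\|^2,
\]
where the middle equality uses $\|T_1T_2h\|=\|QT_2T_1h\|=\|T_2T_1h\|$, i.e.\ precisely that $Q$ is unitary. Hence the assignment $(D_{T_2}h,D_{T_1}T_2h)\mapsto(D_{T_1}h,D_{T_2}T_1h)$ extends to a well-defined isometry between the relevant subspaces of $\dtone\oplus\cld_{T_2}$ (with the two factors suitably identified), which I would complete to a unitary $\Gamma$ on the coefficient space. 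For the remaining two cases the same computation goes through with a $Q$ inserted inside a defect term: for $T_1T_2=T_2QT_1$ one gets $\|D_{T_2}h\|^2+\|D_{T_1}T_2h\|^2=\|D_{T_1}h\|^2+\|D_{T_2}QT_1h\|^2$, and for $T_1T_2=T_2T_1Q$ one gets $\|D_{T_2}h\|^2+\|D_{T_1}T_2h\|^2=\|D_{T_1}Qh\|^2+\|D_{T_2}T_1Qh\|^2$, each time with the unitarity of $Q$ guaranteeing that the two sides agree, so that the twisted isometry is well defined.

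With $\Gamma$ in hand I would write down $V_2$ by the usual Ando block-shift formula, so that $V_2$ is automatically an isometric co-extension of $T_2$ and $(V_1,V_2)$ has the prescribed compressions; the $q$-commuting corollary then falls out by taking $Q=qI_{\clh}$. I expect the main obstacle to be the construction of $\wt{Q}$ and the verification of the twisted commutation $V_1V_2=\wt{Q}V_2V_1$. Unlike the commuting case, here $Q$ need not commute with $T_1$ or with $D_{T_1}$, so $\wt{Q}$ cannot simply be taken as $Q$ amplified diagonally on the coefficient space; I would instead build $\wt{Q}$ as a block unitary on $\clk$ whose coefficient-space part is dictated by $\Gamma$, arrange $\clh$ to be reducing with $\wt{Q}|_{\clh}=Q$, and then check the operator identity $V_1V_2=\wt{Q}V_2V_1$ entrywise. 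The third case $T_1T_2=T_2QT_1$, for which even the existence of a twisted isometric dilation was previously open, is the most delicate, since there the internal $Q$ in the defect identity forces $\wt{Q}$ to intertwine the two shift directions nontrivially; confirming that the resulting $V_2$ remains isometric and that $\wt{Q}$ stays unitary is the step I would budget the most care for. Finally, the dilation formula $T_1^{k_1}T_2^{k_2}=P_{\clh}V_1^{k_1}V_2^{k_2}|_{\clh}$ follows from the co-extension property together with the twisted commutation exactly as in Ando's argument.
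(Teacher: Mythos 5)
Your strategy is the paper's strategy (Ando's explicit construction, with the twist absorbed by a defect-space unitary whose existence rests on a norm identity that uses unitarity of $Q$), and your three norm identities are all correct: they are exactly the operator identities the paper computes (e.g.\ $T_2^*D_{T_1}^2T_2+D_{T_2}Q^*QD_{T_2}=T_1^*D_{T_2}Q^*QD_{T_2}T_1+D_{T_1}Q^*QD_{T_1}$ in Case I), up to unitary factors of $Q$ inside norms, which change nothing. Two small corrections before the main point: the final dilation formula needs only the co-extension property $V_i^*|_{\clh}=T_i^*$ (no commutation is required), and your assertion that ``$\widetilde{Q}$ cannot be $Q$ amplified diagonally'' is false as stated --- in the paper's Case I, $\widetilde{Q}=\bigoplus_0^\infty Q$ \emph{is} diagonal (the twist is instead carried by $W_1$, which maps $(h_0,h_1,h_2,\dots)$ to $(T_1h_0,D_{T_1}h_0,0,Qh_1,Q^2h_2,\dots)$). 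In fact, for the relations $T_1T_2=QT_2T_1$ and $T_1T_2=T_2T_1Q$ your plan closes up with almost no work: taking the untwisted Sch\"affer isometries $W_i(h_0,h_1,\dots)=(T_ih_0,D_{T_i}h_0,0,h_1,\dots)$, a single unitary $\widetilde{G}=I_{\clh}\oplus G\oplus G\oplus\cdots$ on $\clk=\clh\oplus\clg\oplus\clg\oplus\cdots$ ($\clg=\clh^{\oplus 4}$), $V_1=\widetilde{G}W_1$, $V_2=W_2\widetilde{G}^{-1}$, and $\widetilde{Q}=Q\oplus I_{\clk\ominus\clh}$, one gets
\begin{align*}
V_1V_2(h_0,h_1,\dots)&=\big(T_1T_2h_0,\,G(D_{T_1}T_2h_0,0,D_{T_2}h_0,0),\,h_1,h_2,\dots\big),\\
V_2V_1(h_0,h_1,\dots)&=\big(T_2T_1h_0,\,(D_{T_2}T_1h_0,0,D_{T_1}h_0,0),\,h_1,h_2,\dots\big),
\end{align*}
so the twisted relation reduces precisely to the isometry you defined (and to its Case-III analogue with $Q$ inserted), exactly as in Ando.

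The genuine gap is the middle case $T_1T_2=T_2QT_1$ --- the very case the paper's introduction singles out as previously open, and the one you yourself flag as most delicate. There you supply only the norm identity; but the norm identity is not the hard part. Repeating the computation above for $V_2\widetilde{Q}V_1$ with a block-diagonal $\widetilde{Q}=Q\oplus\widetilde{Q}_1\oplus\widetilde{Q}_2\oplus\cdots$, comparison of the shifted tails forces $\widetilde{Q}_j=I$ for all $j\ge 2$, while the defect blocks force $G^{-1}\widetilde{Q}_1G$ to fix every vector $(0,0,h_1,h_2)$ and simultaneously to send $(D_{T_1}h_0,0,0,0)$ to the vector $(\ast,\ast,0,0)$ prescribed by the requirement $G(D_{T_1}T_2h_0,0,D_{T_2}h_0,0)=(D_{T_2}QT_1h_0,0,\ast,\ast)$. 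So in this case $\widetilde{Q}$ must be nontrivially intertwined with $G$, and proving that a unitary pair $(G,\widetilde{Q}_1)$ satisfying these coupled constraints exists is an additional isometry-plus-extension argument that your proposal never carries out; it does not follow from the norm identity alone. This is exactly where the paper spends its effort: it twists $W_1$ by powers of $Q$, introduces the commuting unitaries $R=Q^{\oplus 4}$ and $P=I\oplus Q\oplus Q^2\oplus Q^3$ on $\clg$, corrects $\widetilde{G}_1,\widetilde{G}_2$ by powers of $R$ so the tails realign, and in this case takes the non-diagonal $\widetilde{Q}=Q\oplus\bigoplus_0^\infty GRG^{-1}$ before verifying $V_1V_2=V_2\widetilde{Q}V_1$ entrywise. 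Until you produce such a construction (or an equivalent resolution of the coupled constraints above), the theorem is not proved for the case $T_1T_2=T_2QT_1$, and hence not proved.
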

\begin{proof} We break the proof into three cases following the Definition \ref{Q com} of $Q$-commutativity.

\NI	\underline{Case-I:} \,First assume that $T_1T_2=QT_2T_1$. Let us consider the Hilbert space $\mathcal{K}=\bigoplus\limits_{0}^{\infty}\mathcal{H}$. We embed $\mathcal{H}$ in $\mathcal{K}$ via a map $h\mapsto (h,0,0,\dots)$. For a contraction $T$, let $D_T:=(I-T^*T)^{\frac{1}{2}}$. Define the operators $W_1$ and $W_2$ on $\mathcal{K}$ by 
	\begingroup
	\allowdisplaybreaks
	\begin{align*}
	W_1(h_0,h_1,h_2,h_3,\dots)&=(T_1h_0,D_{T_1}h_0,0,Qh_1,Q^2h_2,Q^3h_3,\dots)\\ 
	\text{and}\,\,W_2(h_0,h_1,h_2,h_3,\dots)&=(T_2h_0,D_{T_2}h_0,0,h_1,h_2,h_3,\dots).
	\end{align*}
	\endgroup
	The operators $W_1,W_2$ are isometries as $\|D_{T_i}h\|^2=\|h\|^2-\|T_ih\|^2$, for all $i=1,2$. Set $\widetilde{Q}=\bigoplus\limits_{0}^{\infty}Q$. Clearly, $\widetilde{Q}$ on $\mathcal{K}$ is a unitary. In general, $W_1W_2\neq \widetilde{Q}W_2W_1$. We reshape $W_1$ and $W_2$ in such a way that they give rise to an isometric dilation of $(T_1,T_2)$ and satisfy the $\widetilde{Q}$-commuting relation.
	
	Let us consider the space $\mathcal{G}=\mathcal{H}\oplus\mathcal{H}\oplus\mathcal{H}\oplus\mathcal{H}$ and identify $\mathcal{K}$ with $\mathcal{H}\oplus\big(\bigoplus\limits_0^{\infty}\mathcal{G}\big)$ by the natural identification
	\[
	(h_0,h_1,h_2,h_3,\dots)=(h_0,(h_1,h_2,h_3,h_4),(h_5,h_6,h_7,h_8),\dots).
	\]
	Define operators $R$ and $P$ on $\mathcal{G}$ by
	\begingroup
	\allowdisplaybreaks
	\begin{align*}
		R(h_i,h_j,h_k,h_m)&=(Qh_i,Qh_j,Qh_k,Qh_m)\\
	\text{and}\,\,	P(h_i,h_j,h_k,h_m)&=(h_i,Qh_j,Q^2h_k,Q^3h_m).
	\end{align*}
	\endgroup
	Needless to say, $R$ and $P$ are unitaries.
	Let $G:\mathcal{G}\to\mathcal{G}$ be a unitary, resolved later. Define $\widetilde{G_1}$ and $\widetilde{G_2}$ on $\mathcal{K}$ by
	\begingroup
	\allowdisplaybreaks
	\begin{align*}
	\widetilde{G_1}(h_0,h_1,h_2,\dots)&=(h_0,G(h_1,h_2,h_3,h_4),GR^{-3}(h_5,h_6,h_7,h_8),GR^{-6}(h_9,h_{10},h_{11},h_{12}),\dots),\\
	\widetilde{G_2}(h_0,h_1,h_2,\dots)&=(h_0,GP(h_1,h_2,h_3,h_4),GRP(h_5,h_6,h_7,h_8),GR^2P(h_9,h_{10},h_{11},h_{12}),\dots).
	\end{align*}
	\endgroup
	Clearly $\widetilde{G_1},\widetilde{G_2}$ are unitaries and 
	\begingroup
	\allowdisplaybreaks
	\begin{align*}
	&\widetilde{G_2}^{-1}(h_0,h_1,h_2,h_3,\dots)\\
	=&\left(h_0,(GP)^{-1}(h_1,h_2,h_3,h_4),(GRP)^{-1}(h_5,h_6,h_7,h_8),(GR^2P)^{-1}(h_9,h_{10},h_{11},h_{12}),\dots\right).
	\end{align*}
	\endgroup
	Set $V_1=\widetilde{G_1}W_1$ and $V_2=W_2\widetilde{G_2}^{-1}$. Clearly $V_1$ and $V_2$ are isometries. Our aim is to find the unitary $G$ on $\mathcal{G}$ such that $V_1V_2=\widetilde{Q}V_2V_1$. To this end, first we write down $V_1$ and $V_2$ separately.
	\begin{align*}
	V_1&(h_0,h_1,h_2,h_3,\dots)\\
	&=\widetilde{G_1}W_1(h_0,h_1,h_2,h_3,\dots)\\
	&=\widetilde{G_1}(T_1h_0,D_{T_1}h_0,0,Qh_1,Q^2h_2,Q^3h_3, \dots)\\
	&=\big(T_1h_0,G(D_{T_1}h_0,0,Qh_1,Q^2h_2),GR^{-3}R^3P(h_3,h_4,h_5,h_6),\\
		&\hspace{5cm} GR^{-6}R^7P(h_7,h_8,h_9,h_{10}), GR^{-9}R^{11}P(h_{11},h_{12},h_{13},h_{14}),\dots\big)\\
&=\big(T_1h_0,G(D_{T_1}h_0,0,Qh_1,Q^2h_2),GP(h_3,h_4,h_5,h_6),\\
		&\hspace{5cm} GRP(h_7,h_8,h_9,h_{10}), GR^2P(h_{11},h_{12},h_{13},h_{14}),\dots\big).
	\end{align*}
	Also,
	\begin{align*}
	V_2&(h_0,h_1,h_2,h_3,\dots)\\
	&=W_2\widetilde{G_2}^{-1}(h_0,h_1,h_2,h_3,\dots)\\
	&=W_2\big(h_0,(GP)^{-1}(h_1,h_2,h_3,h_4),(GRP)^{-1}(h_5,h_6,h_7,h_8),(GR^2P)^{-1}(h_9,h_{10},h_{11},h_{12}),\dots\big)\\
	&=\big(T_2h_0,D_{T_2}h_0,0,(GP)^{-1}(h_1,h_2,h_3,h_4),\\
	&\hspace{5cm} (GRP)^{-1}(h_5,h_6,h_7,h_8), (GR^2P)^{-1}(h_9,h_{10},h_{11},h_{12}),\dots\big).
	\end{align*}
	Using these expressions of $V_1$ and $V_2$, we calculate $V_1V_2$ and $\widetilde{Q}V_2V_1$.
	\begin{align*}
	V_1V_2&(h_0,h_1,h_2,h_3,\dots)\\
	=&V_1\big(T_2h_0,D_{T_2}h_0,0,(GP)^{-1}(h_1,h_2,h_3,h_4),(GRP)^{-1}(h_5,h_6,h_7,h_8),\\
	& \qquad\qquad\qquad\qquad\qquad\qquad\qquad\qquad\qquad\qquad\qquad\qquad(GR^2P)^{-1}(h_9,h_{10},h_{11},h_{12}),\dots\big)\\
	=&\big(T_1T_2h_0,G(D_{T_1}T_2h_0,0,QD_{T_2}h_0,0),(h_1,h_2,h_3,h_4),(h_5,h_6,h_7,h_8),(h_9,h_{10},h_{11},h_{12}),\dots\big).
	\end{align*}
On the other hand,
\begin{align*}
	\widetilde{Q}V_2V_1&(h_0,h_1,h_2,h_3,\dots)\\
	=&\widetilde{Q}W_2\widetilde{G_2}^{-1}\big(T_1h_0,G(D_{T_1}h_0,0,Qh_1,Q^2h_2),GP(h_3,h_4,h_5,h_6),GRP(h_7,h_8,h_9,h_{10}),\\
	&\qquad\qquad\qquad\qquad\qquad\qquad\qquad\qquad GR^2P(h_{11},h_{12},h_{13},h_{14}),\dots\big)\\
	=&\widetilde{Q}W_2\big(T_1h_0,(GP)^{-1}G(D_{T_1}h_0,0,Qh_1,Q^2h_2),(GRP)^{-1}(GP)(h_3,h_4,h_5,h_6),\\
	&\qquad\qquad\quad(GR^2P)^{-1}(GRP)(h_7,h_8,h_9,h_{10}),
	(GR^3P)^{-1}(GR^2P)(h_{11},h_{12},h_{13},h_{14}),\dots\big)\\
   =&\widetilde{Q}W_2\big(T_1h_0,(D_{T_1}h_0,0,Q^{-1}h_1,Q^{-1}h_2),R^{-1}(h_3,h_4,h_5,h_6),R^{-1}(h_7,h_8,h_9,h_{10}),\dots\big) \\
   &\hspace{11cm} [\text{since }RP=PR]\\
	=&\widetilde{Q}\big(T_2T_1h_0,(D_{T_2}T_1h_0,0,D_{T_1}h_0,0),Q^{-1}(h_1,h_2,h_3,h_4),Q^{-1}(h_5,h_6,h_7,h_8),\dots\big)\\
	=&\big(QT_2T_1h_0,(QD_{T_2}T_1h_0,0,QD_{T_1}h_0,0),(h_1,h_2,h_3,h_4),(h_5,h_6,h_7,h_8),\dots\big).
	\end{align*}

	\NI Since $T_1T_2=QT_2T_1$, therefore, $V_1V_2=\widetilde{Q}V_2V_1$ if and only if 
	\begin{equation}\label{eqn1}
	G(D_{T_1}T_2h_0,0,QD_{T_2}h_0,0)=(QD_{T_2}T_1h_0,0,QD_{T_1}h_0,0) \text{ for all }h_0\in\mathcal{H}.
	\end{equation}
	
	\noindent Suppose 
	\[\mathcal{L}_1=\overline{\left\{(D_{T_1}T_2h_0,0,QD_{T_2}h_0,0):h_0\in \mathcal{H}\right\}}\quad \text{and}\,\, \mathcal{L}_2=\overline{\left\{(QD_{T_2}T_1h_0,0,QD_{T_1}h_0,0):h_0\in \mathcal{H}\right\}}.\]
	Since
	\begingroup
	\allowdisplaybreaks
	\begin{align*}
	 T_2^*D^2_{T_1}T_2+D_{T_2}Q^*Q D_{T_2} =&T_2^*D^2_{T_1}T_2+D^2_{T_2}\\ =&T_2^*(I-T_1^*T_1)T_2+(I-T_2^*T_2)\\
	 =&T_1^*(I-T_2^*Q^*QT_2)T_1+(I-T_1^*T_1)\quad\quad[\text{using}\,\, T_1T_2=QT_2T_1]\\
	 =&T_1^*(I-T_2^*T_2)T_1+(I-T_1^*T_1)\\
	 =&T_1^*D^2_{T_2}T_1+D^2_{T_1}\\
	 =&T_1^*D_{T_2}Q^*QD_{T_2}T_1+D_{T_1}Q^*QD_{T_1},
	 \end{align*} 
	 \endgroup
	 we have
	\[\|D_{T_1}T_2h_0\|^2+\|QD_{T_2}h_0\|^2= \|QD_{T_2}T_1h_0\|^2+\|QD_{T_1}h_0\|^2\]
	for all $h_0\in \mathcal{H}$.
	So,  the operator 
	$
	G:\mathcal{L}_1\to \mathcal{L}_2
	$
	defined by
	\[
	G(D_{T_1}T_2h_0,0,QD_{T_2}h_0,0)=(QD_{T_2}T_1h_0,0,QD_{T_1}h_0,0)
	\]
	is an isometry. It is easy to check that $\text{dim }\mathcal{G}\ominus \mathcal{L}_1=\text{dim }\mathcal{G}\ominus \mathcal{L}_2$. Therefore, $G$ extends to a unitary, denoted again by $G$, on $\mathcal{G}$ such that \eqref{eqn1} holds. Thus $V_1V_2=\widetilde{Q}V_2V_1$. It is clear from the above expression of $V_1$ and $V_2$ that
	\[
	V_i^{k_i}(h_0,h_1,h_2,\dots)=(T_i^{k_i}h_0,\dots) 
	\]
	for all $i=1,2$ and $k_i\in\Z$. Hence
	\[T_1^{k_1}T_2^{k_2}=P_{\mathcal{H}}V_1^{k_1}V_2^{k_2}|_{\mathcal{H}}\]
	for all $k_1,k_2\in\Z$.
	
	\noindent \underline{Case-II:} \,Suppose $T_1T_2=T_2QT_1$. For this case, consider \[\mathcal{L}_1=\overline{\left\{(D_{T_1}T_2h_0,0,QD_{T_2}h_0,0):h_0\in \mathcal{H}\right\}}\quad \text{and}\,\, \mathcal{L}_2=\overline{\left\{(D_{T_2}QT_1h_0,0,QD_{T_1}h_0,0):h_0\in \mathcal{H}\right\}}.\]
	Since 
	\begingroup
	\allowdisplaybreaks
	\begin{align*}
	 T_2^*D^2_{T_1}T_2+D_{T_2}Q^*Q D_{T_2} =&T_2^*(I-T_1^*T_1)T_2+(I-T_2^*T_2)\\
	 =& I-T_2^*T_1^*T_1T_2\\
	 =&T_1^*Q^*(I-T_2^*T_2)QT_1+(I-T_1^*T_1)\\
	 =&T_1^*Q^*D^2_{T_2}QT_1+D_{T_1}Q^*QD_{T_1},
	 \end{align*} 
	 \endgroup
	 we have 
	\[\|D_{T_1}T_2h_0\|^2+\|QD_{T_2}h_0\|^2=\|D_{T_2}QT_1h_0\|^2+\|QD_{T_1}h_0\|^2\]
	for all $h_0\in \mathcal{H}$. So, the operator 
	$
	G:\mathcal{L}_1\to \mathcal{L}_2
	$
	defined by
	\[
	G(D_{T_1}T_2h_0,0,QD_{T_2}h_0,0)=(D_{T_2}QT_1h_0,0,QD_{T_1}h_0,0)
	\]
	is an isometry. Therefore, $G$ extends to a unitary, denoted again by $G$, on $\mathcal{G}$ such that $G(D_{T_1}T_2h_0,0,QD_{T_2}h_0,0)=(D_{T_2}QT_1h_0,0,QD_{T_1}h_0,0)$ for all $h\in \mathcal{H}$, where $\mathcal{G}$ is defined as in Case-I. Also let $W_1,W_2,R,P,\widetilde{G}_1,\widetilde{G}_2, V_1$ and $V_2$ be all as in Case-I. So, $V_1V_2$ is as in the previous case. Set $\widetilde{Q}= Q \oplus\bigoplus\limits_{0}^{\infty}GRG^{-1}$ on $\mathcal{K}=\mathcal{{H}}\oplus\bigoplus\limits_{0}^{\infty}\mathcal{G}$. We calculate
	\begingroup
	\allowdisplaybreaks
	\begin{align*}
	V_2\widetilde{Q}V_1&(h_0,h_1,h_2,h_3,\dots)\\
	=&W_2\widetilde{G_2}^{-1}\widetilde{Q}\big(T_1h_0,G(D_{T_1}h_0,0,Qh_1,Q^2h_2),GP(h_3,h_4,h_5,h_6),GRP(h_7,h_8,h_9,h_{10}),\\
	&\qquad\qquad\qquad\qquad\qquad\qquad\qquad\qquad\qquad GR^2P(h_{11},h_{12},h_{13},h_{14}),\dots\big)\\
	=&W_2\widetilde{G_2}^{-1}\big(QT_1h_0,GR(D_{T_1}h_0,0,Qh_1,Q^2h_2),GRP(h_3,h_4,h_5,h_6),GR^2P(h_7,h_8,h_9,h_{10}),\\
	&\qquad\qquad\qquad\qquad\qquad\qquad\qquad\qquad\qquad GR^3P(h_{11},h_{12},h_{13},h_{14}),\dots\big)\\
   =&W_2\big(QT_1h_0,P^{-1}R(D_{T_1}h_0,0,Qh_1,Q^2h_2),(h_3,h_4,h_5,h_6),(h_7,h_8,h_9,h_{10}),\dots\big) \\
    =&W_2\big(QT_1h_0,(QD_{T_1}h_0,0,h_1,h_2), (h_3,h_4,h_5,h_6), (h_7,h_8,h_9,h_{10}),\dots\big) \\
	=&\big(T_2QT_1h_0,(D_{T_2}QT_1h_0,0,QD_{T_1}h_0,0),(h_1,h_2,h_3,h_4),(h_5,h_6,h_7,h_8),\dots\big)\\
	=&\big(T_1T_2h_0,G(D_{T_1}T_2h_0,0,QD_{T_2}h_0,0),(h_1,h_2,h_3,h_4),(h_5,h_6,h_7,h_8),\dots\big)
\end{align*}
	\endgroup
This shows that $V_1V_2=V_2\widetilde{Q}V_1$. Also, from the previous case it follows that $(V_1,V_2)$ is an isometric dilation of $(T_1,T_2)$.

\noindent\underline{Case-III:} \,Suppose, $T_1T_2=T_2T_1Q$. In this case, we take $V_1,V_2$ and $\widetilde{Q}$ same as in Case-I and get the required result.
\end{proof}

As a corollary of the above theorem we get the following dilation result for pairs of $q$-commuting contractions where $|q|=1$.

\begin{cor}\label{qcommdil}
	Let $(T_1,T_2)$ be a pair of contractions on a Hilbert space such that $T_1T_2=qT_2T_1$, where $q$ is a unimodular complex number. Then there exist a Hilbert space $\mathcal{K}$ containing $\mathcal{{H}}$ and a pair of isometries $(V_1, V_2)$ on $\mathcal{K}$ such that $V_1V_2=qV_2V_1$ and
	\[ 
	T_1^{k_1}T_2^{k_2}=P_{\mathcal{H}}V_1^{k_1}V_2^{k_2}|_{\mathcal{H}}\]
for all $k_1,k_2\in\Z$.
\end{cor}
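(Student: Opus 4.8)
The plan is to recognize this corollary as the special case $Q = q I_{\clh}$ of Theorem \ref{Qcommdil} and then to track what the abstract unitary $\widetilde{Q}$ produced by that theorem becomes under this choice. First I would observe that, since $|q| = 1$, the scalar operator $Q := q I_{\clh}$ is a unitary on $\clh$ (indeed $Q^* Q = |q|^2 I_{\clh} = I_{\clh}$). With this choice of $Q$, the hypothesis $T_1 T_2 = q T_2 T_1$ is exactly the first $Q$-commuting relation $T_1 T_2 = Q T_2 T_1$ of Definition \ref{Q com}. Hence $(T_1, T_2)$ is a pair of $Q$-commuting contractions to which Theorem \ref{Qcommdil} applies.

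Applying Theorem \ref{Qcommdil} (Case-I), I obtain a Hilbert space $\clk \supset \clh$, isometries $V_1, V_2 \in \clb(\clk)$, and a unitary $\widetilde{Q} \in \clb(\clk)$ with $V_1 V_2 = \widetilde{Q} V_2 V_1$ and $T_1^{k_1} T_2^{k_2} = P_{\clh} V_1^{k_1} V_2^{k_2}|_{\clh}$ for all $k_1, k_2 \in \Z$. The only remaining task is to identify $\widetilde{Q}$ concretely. In the Case-I construction the unitary is defined by $\widetilde{Q} = \bigoplus_0^\infty Q$ on $\clk = \bigoplus_0^\infty \clh$; substituting $Q = q I_{\clh}$ yields $\widetilde{Q} = \bigoplus_0^\infty q I_{\clh} = q I_{\clk}$.

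Consequently the $\widetilde{Q}$-commuting relation collapses to the scalar relation $V_1 V_2 = \widetilde{Q} V_2 V_1 = q I_{\clk} V_2 V_1 = q V_2 V_1$, and combined with the dilation identity already supplied by the theorem this establishes the corollary. I do not anticipate any real obstacle: the single point requiring care is checking that the general unitary $\widetilde{Q}$ furnished by Theorem \ref{Qcommdil} reduces to the scalar $q I_{\clk}$ in this setting, which is immediate from the direct-sum form of $\widetilde{Q}$ in the Case-I construction.
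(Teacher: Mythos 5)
Your proposal is correct and follows exactly the paper's own route: the paper likewise sets $Q=qI_{\mathcal{H}}$, $\widetilde{Q}=qI_{\mathcal{K}}$ and invokes Theorem \ref{Qcommdil}. Your additional verification that the Case-I unitary $\widetilde{Q}=\bigoplus_0^{\infty}Q$ collapses to the scalar $qI_{\mathcal{K}}$ is precisely the (unstated) detail the paper's one-line proof relies on, so nothing is missing.
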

\begin{proof}
    If we consider $Q=qI_{\mathcal{H}}$ and $\widetilde{Q}=qI_{\mathcal{K}}$, then it directly follows from Theorem \ref{Qcommdil}.
\end{proof}

\begin{remark}
In the particular case when $q=1$, we recover the Ando's dilation theorem where the explicit construction matches with the original construction of Ando.
\end{remark}
%Let us remark that in the particular case when $q=1$, we again find Ando's dilation theorem.\\

The following proposition shows that every pair of $Q$-commuting isometries admits a $\widetilde{Q}$-commuting unitary extension. We write the proposition in two separate parts. Part (i) has been proved in \cite{MS} (see Theorem 2.18 and Corollary 2.19 in \cite{MS}) and we include it in the statement for the sake of completeness. We only prove part (ii) of the proposition and for this we follow the same technique as in the proof of Theorem 2.18 in \cite{MS}.

\begin{propn}
    Let $(V_1,V_2)$ be a pair of isometries on a Hilbert space $\clh$ and $Q\in\mathcal{B}(\mathcal{H})$ be a unitary.
    \begin{itemize}
        \item[(i)] If $V_1V_2=QV_2V_1$ (resp. $V_1V_2=V_2V_1Q$), then there exist a Hilbert space $\mathcal{K}$ $(\supset \mathcal{H})$, a unitary $\widetilde{Q}$ with $\widetilde{Q}|_{\clh}=Q$ and a pair of unitaries $(U_1,U_2)$ on $\mathcal{K}$ such that $U_1U_2=\widetilde{Q}U_2U_1$ (resp. $U_1U_2=U_2U_1\widetilde{Q}$) and $U_i|_{\mathcal{H}}=V_i$ for all $i=1,2$.
        \item[(ii)] If $V_1V_2=V_2QV_1$, then there exist a Hilbert space $\mathcal{K}$ $\supset\mathcal{H}$, a unitary $\widetilde{Q}$ with $\widetilde{Q}|_{\clh}=Q$ and a pair of unitaries $(U_1,U_2)$ on $\mathcal{K}$ such that $U_1U_2=U_2\widetilde{Q}U_1$ and $U_i|_{\mathcal{H}}=V_i$ for all $i=1,2$.
    \end{itemize}
    In fact, we can choose $\widetilde{Q}=Q\oplus I|_{\mathcal{K}\ominus \mathcal{H}}$ in all cases.
\end{propn}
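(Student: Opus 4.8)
The plan is to follow the strategy of Theorem~2.18 in \cite{MS}: unitarize the two isometries one after the other by passing to minimal unitary extensions, lifting the remaining operators at each stage so that the single twisted identity is preserved. The only genuinely new point for case~(ii) is that here $Q$ sits \emph{between} $V_1$ and $V_2$; consequently the lift has to carry a twisting cocycle instead of a fixed conjugation, and this is where the work lies.

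First I would record the minimal unitary extension of one isometry. For an isometry $V$ on $\mathcal H$ there is a minimal unitary extension $(U,\mathcal K)$, built either from a concrete Schäffer-type model or, equivalently, as the Hilbert-space inductive limit of $\mathcal H\xrightarrow{V}\mathcal H\xrightarrow{V}\cdots$; it satisfies $U|_{\mathcal H}=V$, leaves $\mathcal H$ invariant, and obeys $\mathcal K=\overline{\Span}\{U^{-n}h:n\ge 0,\ h\in\mathcal H\}$. On $\mathcal K$ the prescribed unitary $Q'=Q\oplus I_{\mathcal K\ominus\mathcal H}$ has a convenient description through a cocycle: putting $Q_0=Q$ and $Q_n=VQ_{n-1}V^{*}+(I-VV^{*})$ yields unitaries on $\mathcal H$ with $VQ_{n-1}=Q_nV$ and $Q_n|_{\ker V^{*}}=I$, and a short check shows that $Q'$ is exactly the operator $U^{-n}h\mapsto U^{-n}Q_n h$.

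The heart of the proof is the lifting step. Suppose an isometry (or unitary) $B$ on $\mathcal H$ satisfies the twisted relation $VB=BQV$. I would define its lift on $\mathcal K$ by $\widehat B\,U^{-n}h:=U^{-n}B\,\Pi_n h$, where $\Pi_n=Q_0Q_1\cdots Q_{n-1}$ and $\Pi_0=I$. An induction whose base case is precisely $VB=BQV$ gives the identity $VB\Pi_n=B\Pi_{n+1}V$; this one identity forces $\widehat B$ to be \emph{well defined} on the inductive limit, and, fed into the inner-product formula $\langle U^{-n}h,U^{-m}g\rangle=\langle V^{m-n}h,g\rangle$ (for $m\ge n$), it shows that $\widehat B$ is an isometry, is unitary when $B$ is, extends $B$, and satisfies $U\widehat B=\widehat B Q' U$ (the last holding by construction once $\widehat B$ is defined). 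I expect this inductive propagation of a single algebraic relation to a relation valid on a dense set, while keeping the cocycle $Q_n$ consistent with $Q'=Q\oplus I$, to be the main obstacle, just as in \cite{MS}.

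To finish I would run the lifting step twice. Applying it with $V=V_1$ and $B=V_2$ produces a unitary $U_1$ on $\mathcal K_1\supset\mathcal H$, an isometry $\widehat V_2$ extending $V_2$, and $\widehat Q=Q\oplus I$, with $U_1\widehat V_2=\widehat V_2\widehat Q\,U_1$. Next, unitarizing $\widehat V_2$ on a larger space $\mathcal K\supset\mathcal K_1$ and lifting the pair $(U_1,\widehat Q)$ by the transposed form of the same construction --- now the cocycle is built from $\widehat V_2$, and $U_1$ is lifted through the products of the $\widehat Q_n^{-1}$ --- yields unitaries $U_1,U_2$ and $\widetilde Q=\widehat Q\oplus I=Q\oplus I_{\mathcal K\ominus\mathcal H}$ on $\mathcal K$ with $U_1U_2=U_2\widetilde Q\,U_1$ and $U_i|_{\mathcal H}=V_i$ for $i=1,2$. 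This is exactly the assertion, and the construction parallels the proof of part~(i) in \cite{MS}, the sole change being the slot occupied by $Q$.
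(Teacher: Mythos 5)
Your proposal is correct, and all the identities it hinges on do check out: the $Q_n$ are unitary with $VQ_{n-1}=Q_nV$ and $Q_n|_{\ker V^*}=I$; the operator $U^{-n}h\mapsto U^{-n}Q_nh$ is indeed $Q\oplus I_{\clk\ominus\clh}$ (consistency across the relations $U^{-n}h=U^{-(n+1)}Vh$ is exactly the cocycle identity); the induction gives $VB\Pi_n=B\Pi_{n+1}V$ with $\Pi_n=Q_0\cdots Q_{n-1}$, and this single identity yields well-definedness, isometricity, the extension property and $U\widehat{B}=\widehat{B}Q'U$; and the transposed second step works with $\Lambda_n=\widehat{Q}_n^{-1}\cdots\widehat{Q}_1^{-1}$. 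However, your route differs from the paper's in a substantive way, even though both follow Theorem 2.18 of \cite{MS}. The paper extends $V_2$ first, lifting $V_1$ by the closed-form definition $\widetilde{V_1}(\widetilde{V_2}^nh)=(\widetilde{V_2}\widehat{Q})^nV_1h$ $(n\in\mathbb{Z})$; it then takes the minimal unitary extension $U_1$ of $\widetilde{V_1}$ and defines the second unitary \emph{through its adjoint}, $U_2^*(U_1^{*n}k)=(\widetilde{Q}U_1)^{*n}\widetilde{V_2}^*k$. With that definition unitarity of $U_2$ is not automatic: the paper must prove surjectivity of $U_2^*$ by an induction over the subspaces $\mathcal{K}_n=\overline{\Span}\{U_1^{*j}\widetilde{\clh}:0\le j\le n\}$, and then use a block-matrix argument to see that $U_2$ extends $\widetilde{V_2}$. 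You extend in the opposite order ($V_1$ first), and in your second step the operator being lifted, $U_1$, is already unitary; since each $\Lambda_nU_1$ is unitary on $\mathcal{K}_1$, your lift is an isometry with dense range, hence unitary, and it extends $U_1$ by construction — so the surjectivity induction and the block-matrix step, the most delicate parts of the paper's proof, simply do not arise. What the paper's formulation buys is explicitness: two direct formulas on spanning vectors, with no cocycle bookkeeping. What yours buys is a single reusable lifting lemma applied twice (in its original and transposed forms) and the elimination of the hardest verification; the cost is the extra layer of notation ($Q_n$, $\Pi_n$, $\Lambda_n$) and the need to verify the cocycle identities, which you correctly flagged as the crux and which indeed go through.
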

\begin{proof}
     We prove part (ii) only. Let $V_2$ extends to some unitary $\widetilde{V_2}$ on $\widetilde{\mathcal{H}}$ such that the extension is minimal, that is,  $\widetilde{\clh}=\overline{\text{span}}\{\widetilde{V_2}^n\mathcal{H}:n\in \mathbb{Z}\}$. Set $\widehat{Q}=(Q\oplus I_{\widetilde{\clh}\ominus \mathcal{H}})\in \mathcal{B}(\widetilde{\mathcal{H}})$. Define $\widetilde{V_1}: \widetilde{\mathcal{H}}\to \widetilde{\mathcal{H}}$ by 
    \[
    \widetilde{V_1}(\widetilde{V_2}^nh)=(\widetilde{V_2}\widehat{Q})^nV_1h \quad (h\in \mathcal{H}, n\in \mathbb Z). 
    \]
 Now, for all $h,h'\in \mathcal{H}$ and $n\geq m$,
    \begingroup
    \allowdisplaybreaks
    \begin{align*}
        \langle \widetilde{V_1}(\widetilde{V_2}^nh), \widetilde{V_1}(\widetilde{V_2}^mh') \rangle&=\langle (\widetilde{V_2}\widehat{Q})^nV_1h, (\widetilde{V_2}\widehat{Q})^mV_1h' \rangle\\
        &=\langle (\widetilde{V_2}\widehat{Q})^{n-m}V_1h, V_1h' \rangle\\
        &=\langle (V_2Q)^{n-m}V_1h,V_1h' \rangle\hspace{2cm} \big[\text{since}\, (V_2Q)^n= (\widetilde{V_2}\widehat{Q})^n|_{\mathcal{H}}\big]\\
        &=\langle V_1V_2^{n-m}h, V_1h' \rangle \hspace{3.6cm} [\text{using }V_1V_2=V_2QV_1]\\
        &=\langle V_2^{n-m}h,h' \rangle \hspace{3.9cm} [\text{since $V_1$ is an isometry}]\\
        &=\langle \widetilde{V_2}^{n-m}h,h' \rangle \hspace{5.2cm}[\text{since } \widetilde{V_2}|_{\mathcal{H}}=V_2]\\
        &=\langle \widetilde{V_2}^nh, \widetilde{V_2}^mh' \rangle.
    \end{align*}
    \endgroup
    Therefore, $\widetilde{V_1}$ on $\widetilde{\mathcal{H}}$ is an isometric extension of $V_1$ on $\mathcal{H}$. It is clear that $\widetilde{V_1}\widetilde{V_2}=\widetilde{V_2}\widehat{Q}\widetilde{V_1}$. Suppose, $U_1$ on $\mathcal{K}$ is the minimal unitary extension of $\widetilde{V_1}$ on $\widetilde{\mathcal{H}}$. Then,
    \[\mathcal{K}=\overline{\text{span}}\big\{U_1^n\widetilde{\mathcal{H}}: n\in \mathbb Z\big\}=\overline{\text{span}}\big\{U_1^{*n}\widetilde{\mathcal{H}}: n\in\Z\big\},\]
    as $\widetilde{H}$ is an invariant subspace for $U_1$. Define $U_2:\mathcal{K}\to \mathcal{K}$, through its adjoint, by
    \[
    U_2^*(U_1^{*n}k)=(\widetilde{Q}U_1)^{*n}\widetilde{V_2}^*k \quad ( k\in \widetilde{\mathcal{H}}, n\in\Z),
    \]
    where $\widetilde{Q}=(Q\oplus I_{\mathcal{K}\ominus\mathcal{H}})\in \mathcal{B}(\mathcal{K})$. Then, for $k,k'\in \widetilde{\clh}$ and $n\geq m$,
    \begingroup
    \allowdisplaybreaks
    \begin{align*}
        \langle U_2^*(U_1^{*n}k), U_2^*(U_1^{*m}k') \rangle&= \langle (\widetilde{Q}U_1)^{*n}\widetilde{V_2}^*k,(\widetilde{Q}U_1)^{*m}\widetilde{V_2}^*k' \rangle\\
        &=\langle (\widetilde{Q}U_1)^{*(n-m)}\widetilde{V_2}^*k, \widetilde{V_2}^*k' \rangle\\
        &=\langle (\widehat{Q}\widetilde{V_1})^{*(n-m)}\widetilde{V_2}^*k,\widetilde{V_2}^*k' \rangle \quad \big[\text{since }(\widehat{Q}\widetilde{V_1})^{*l}=P_{\widetilde{\mathcal{H}}}(\widetilde{Q}U_1)^{*l}|_{\widetilde{\mathcal{H}}}\big]\\
        &=\langle \widetilde{V_2}^*\widetilde{V_1}^{*(n-m)}k, \widetilde{V_2}^*k' \rangle \hspace{2.6cm} \big[\text{using }\widetilde{V_1}\widetilde{V_2}=\widetilde{V_2}\widehat{Q}\widetilde{V_1}\big]\\
        &=\langle \widetilde{V_1}^{*(n-m)}k, k' \rangle \\
        &=\langle U_1^{*(n-m)}k,k' \rangle \hspace{3.9cm} \big[\text{since }\widetilde{V_1}^*=P_{\widetilde{\mathcal{K}}}U_1^*|_{\widetilde{\mathcal{K}}}\big]\\
        &=\langle U_1^{*n}k,U_1^{*m}k' \rangle.
    \end{align*}
    \endgroup
    Therefore, $U_2^*$ on $\mathcal{K}$ is an isometric extension of $\widetilde{V_2}^*$ on $\widetilde{\mathcal{H}}$. Now we shall prove that $U_2^*$ on $\mathcal{K}$ is unitary. Since $U_2^*$ is isometry, it suffices to show that $U_2^*$ is onto. For $n>0$, set
    \[
    \mathcal{K}_n=\overline{\text{span}}\left\{U_1^{*j}\widetilde{\mathcal{H}}: 0\leq j
    \leq n\right\}.
    \]
    We prove by induction that $U_2^*(\mathcal{K}_n)=\mathcal{K}_n$, for all $n>0$. Now for $h\in \widetilde{\mathcal{H}}$, $U_1^*\widetilde{V_2}\widetilde{Q}h \in \mathcal{K}_1$ and 
    \[
    U_2^*\left(U_1^*\widetilde{V_2}\widetilde{Q}h\right)=\left(\widetilde{Q}U_1\right)^*\widetilde{V_2}^*\widetilde{V_2}\widetilde{Q}h=U_1^*\widetilde{Q}^*\widetilde{Q}h=U_1^*h.
    \]
    This implies that $U_2^*\left(\mathcal{K}_1\right)=\mathcal{K}_1$. Now assume that $U_2^*(\mathcal{K}_n)=\mathcal{K}_n$. To prove $U_2^*(\mathcal{K}_{n+1})=\mathcal{K}_{n+1}$, it is enough to show that for $h\in \widetilde{\mathcal{H}}$, $U_1^{*(n+1)}h$ has a pre-image under the operator $U_2^*$. As $U_1^{*n}h\in \mathcal{K}_n$, there exists $y\in \mathcal{K}_n$ such that $U_2^*(y)=U_1^{*n}h$. Again, since $\mathcal{K}_n$ is a reducing subspace for $\widetilde{Q}$, there exists $z\in \mathcal{K}_n$ such that $U_2^*(z)=\widetilde{Q}U_2^*(y)$. Now it is clear that $U_1^*(z)\in \mathcal{K}_{n+1}$ and 
    \[
    U_2^*\left(U_1^*z\right)=U_1^*\widetilde{Q}^*U_2^*(z)=U_1^*U_2^*(y)=U_1^{*(n+1)}(h).
    \]
    Therefore, $U_2^*(\mathcal{K}_{n+1})=\mathcal{K}_{n+1}$ and hence $U_2$ is unitary. Using the facts that $U_2^*|_{\widetilde{\mathcal{H}}}=\widetilde{V_2}^*$ and $\widetilde{V_2}^*$ on $\widetilde{\mathcal{H}}$ is a unitary one can easily prove that the block matrix form of $U_2^*$, with respect to the decomposition $\mathcal{K}=\widetilde{H}\oplus(\mathcal{K}\ominus\widetilde{\mathcal{H}})$, will be a diagonal block matrix. Thus $U_2$ is an extension of $\widetilde{V_2}$. This completes the proof.
\end{proof}

Combining Theorem \ref{Qcommdil} and Proposition \ref{Qcommunidil}, we have the following $\widetilde{Q}$-commuting unitary dilation for a pair of $Q$-commuting contractions.
\begin{thm}\label{Qcommunidil}
    Let $(T_1,T_2)$ be a pair of $Q$-commuting contractions on a Hilbert space $\mathcal{{H}}$, where $Q\in\mathcal{B}(\mathcal{H})$ is a unitary. Then, there exist a Hilbert space $\mathcal{K}\supset\mathcal{{H}}$, unitaries $U_1,U_2$ and $\widetilde{Q} \in\mathcal{B}(\mathcal{K})$  such that $(U_1,U_2)$ is a pair of $\widetilde{Q}$-commuting unitaries and 
	\[T_1^{k_1}T_2^{k_2}=P_{\mathcal{H}}U_1^{k_1}U_2^{k_2}|_{\mathcal{H}}\] 
	for all $k_1, k_2\in \Z$. Moreover, $\mathcal{H}$ is a reducing subspace for $\widetilde{Q}$ and $\widetilde{Q}|_{\mathcal{H}}=Q$.
\end{thm}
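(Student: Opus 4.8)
The plan is to obtain the $\widetilde{Q}$-commuting unitary dilation in two stages: first pass from the $Q$-commuting contractions to $Q_1$-commuting isometries by invoking Theorem \ref{Qcommdil}, and then extend those isometries to $\widetilde{Q}$-commuting unitaries by invoking the preceding proposition, keeping track of which commutation relation is in force at each stage.

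First I would apply Theorem \ref{Qcommdil} to $(T_1,T_2)$. This produces a Hilbert space $\mathcal{K}_1\supset\mathcal{H}$, isometries $V_1,V_2\in\mathcal{B}(\mathcal{K}_1)$, and a unitary $Q_1\in\mathcal{B}(\mathcal{K}_1)$ for which $\mathcal{H}$ is reducing with $Q_1|_{\mathcal{H}}=Q$, such that $(V_1,V_2)$ is a $Q_1$-commuting pair of isometries (satisfying one of the three relations of Definition \ref{Q com}, of the same type as the relation satisfied by $(T_1,T_2)$) and
\[T_1^{k_1}T_2^{k_2}=P_{\mathcal{H}}V_1^{k_1}V_2^{k_2}|_{\mathcal{H}}\qquad(k_1,k_2\in\Z).\]

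Next I would apply the preceding proposition to the $Q_1$-commuting pair of isometries $(V_1,V_2)$ on $\mathcal{K}_1$ --- using part~(i) when the relation is $V_1V_2=Q_1V_2V_1$ or $V_1V_2=V_2V_1Q_1$, and part~(ii) when it is $V_1V_2=V_2Q_1V_1$. This yields a Hilbert space $\mathcal{K}\supset\mathcal{K}_1$, a unitary $\widetilde{Q}=Q_1\oplus I_{\mathcal{K}\ominus\mathcal{K}_1}$ (so $\widetilde{Q}|_{\mathcal{K}_1}=Q_1$), and unitaries $U_1,U_2\in\mathcal{B}(\mathcal{K})$ with $U_i|_{\mathcal{K}_1}=V_i$ for $i=1,2$ such that $(U_1,U_2)$ is $\widetilde{Q}$-commuting with the same type of relation, i.e. $U_1U_2=\widetilde{Q}U_2U_1$, $U_1U_2=U_2\widetilde{Q}U_1$, or $U_1U_2=U_2U_1\widetilde{Q}$ matching the three cases. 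Because $\mathcal{H}\subseteq\mathcal{K}_1$ with $\widetilde{Q}|_{\mathcal{K}_1}=Q_1$ and $Q_1|_{\mathcal{H}}=Q$, we get $\widetilde{Q}|_{\mathcal{H}}=Q$; and since $\widetilde{Q}=Q_1\oplus I$ while $\mathcal{H}$ reduces $Q_1$, the subspace $\mathcal{H}$ reduces $\widetilde{Q}$ as well, which is the ``Moreover'' assertion.

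It remains to verify the power-dilation identity, and this is the only step that requires care. The crucial observation is that $U_i|_{\mathcal{K}_1}=V_i$ makes each $U_i$ leave $\mathcal{K}_1$ invariant, whence $U_i^{k_i}|_{\mathcal{K}_1}=V_i^{k_i}$ for all $k_i\in\Z$. Consequently, for $h\in\mathcal{H}\subseteq\mathcal{K}_1$ and $k_1,k_2\in\Z$, the vector $U_2^{k_2}h=V_2^{k_2}h$ lies in $\mathcal{K}_1$, and then $U_1^{k_1}U_2^{k_2}h=V_1^{k_1}V_2^{k_2}h\in\mathcal{K}_1$. Since the orthogonal projection of $\mathcal{K}$ onto $\mathcal{H}$ agrees on $\mathcal{K}_1$ with the orthogonal projection of $\mathcal{K}_1$ onto $\mathcal{H}$, we conclude
\[P_{\mathcal{H}}U_1^{k_1}U_2^{k_2}h=P_{\mathcal{H}}V_1^{k_1}V_2^{k_2}h=T_1^{k_1}T_2^{k_2}h.\]
I expect this final step to be the only genuine subtlety: the proposition furnishes the $U_i$ as \emph{extensions} rather than co-extensions of the $V_i$, so the argument goes through precisely because every exponent in $\Z$ is nonnegative and hence $\mathcal{K}_1$ remains invariant under $U_1^{k_1}U_2^{k_2}$; everything else is bookkeeping to confirm that each of the three $Q$-commuting relations is transported verbatim through both stages.
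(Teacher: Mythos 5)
Your proposal is correct and is exactly the paper's argument: the paper proves this theorem simply by combining Theorem \ref{Qcommdil} with the preceding proposition on unitary extensions of $Q$-commuting isometries, which is precisely your two-stage plan. The details you supply (matching the three commutation relations to parts (i)/(ii), noting that the $U_i$ are extensions so $\mathcal{K}_1$ is invariant under nonnegative powers, and that the projections onto $\mathcal{H}$ agree on $\mathcal{K}_1$) are the routine verifications the paper leaves implicit, and they are carried out correctly.
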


\section{Dilation for a class of $q$-commuting tuples}\label{tuple case}

\NI In this section, we shall work with the tuples of $q$-commuting contractions. For a given $q: \{1,\dots,n\}\times \{1,\dots,n\}\to \mathbb{T}$ as is Definition \ref{qcommute}, first we denote $\T^{q,n}(\clh)$ as the set of all $n$-tuple of $q$-commuting contractions on a Hilbert space $\clh$, that is,
\[
\T^{q,n}(\clh):=\{T=(T_1,\ldots,T_n)\in \mathcal{B}(\mathcal{H})^n: \|T_i\|\leq 1\text{ and }T_iT_j=q(i,j)T_jT_i\,\,\,\forall\,i,j=1\ldots,n\}.
\]
We shall find isometric dilation for a class of tuples lies in $\T^{q,n}(\clh)$, which generalizes the dilation result for the class of Curto and Vasilescu \cite{CV1} and Brehmer \cite{Bre}, obtained for the commuting case. We define our class later in this section. In Theorem  \ref{Bre-dil} below, we describe dilation for that class. Very soon we shall define some operators, namely $q$-commuting rotational shifts, on the Hardy space over the unit polydisc, which are crucial in describing dilations for our class. We start with recalling the well-known Hardy space and shift operators on it.

\textbf{Hardy space and shift operators:} The Hardy space over the unit polydisc $\D^n$, denoted by $H^2(\D^n)$, is the reproducing kernel Hilbert space with respect to the kernel function $\mathbb{S}_n(\z,\w)$, where $\mathbb{S}_n:\D^n\times\D^n\to \mathbb C$ is the Szeg\"o kernel over the unit polydisc, defined by
\[
\mathbb{S}_n(\z,\w)=\prod_{i=1}^n\frac{1}{1-z_i\bar{w_i}}\quad (\z=(z_1,\ldots,z_n),\,\w=(w_1,\dots,w_n)\in \D^n).
\]
In the vector valued case, for a Hilbert space $\cle$, the $\cle$-valued Hardy space over the unit polydisc $\D^n$, denoted by $H^2_{\cle}(\D^n)$, is the reproducing kernel Hilbert space with respect to the kernel function $\mathbb{S}_n(\z,\w)\cdot I_{\cle}$. One can also concretely describe the space $H^2_{\cle}(\D^n)$ as follows:
\[
H^2_{\cle}(\D^n)=\Big\{\sum_{\bm k\in\Z^n} a_{\bm k}\z^{\bm k}: a_{\bm k}\in\cle\,\,\text{and}\, \sum_{\bm k\in\Z^n}\|a_{\bm k}\|^2<\infty\Big\}.
\]
%A function $\Phi:\D^n\to \clb(\cle)$ is called a multiplier on $H^2_{\cle}(\D^n)$ if $\Phi f\in H^2_{\cle}(\D^n)$ for all $f\in H^2_{\cle}(\D^n)$. A multiplier $\Phi$ naturally induces a multiplication operator $M_{\Phi}:H^2_{\cle}(\D^n)\to H^2_{\cle}(\D^n)$, defined by
%\[
%M_{\Phi}f=\Phi f\quad (f\in H^2_{\cle}(\D^n)).
%\]
Note that, the space $H^2_{\cle}(\D^n)$ can be viewed as $H^2(\D^n)\otimes \cle$. Shift operators are well-known isometries on the Hardy space $H^2_{\cle}(\D^n)$. With respect to each co-ordinate function $z_i$, $i=1,\dots,n$, the $i$-th shift operator $M_{z_i}: H^2_{\cle}(\D^n)\to H^2_{\cle}(\D^n)$ is defined by
\[
M_{z_i}f(\w)=w_if(\w)\quad (\w\in\D^n, f\in H^2_{\cle}(\D^n)).
\]
It is easy to check that, $(M_{z_1},\ldots,M_{z_n})$ is an $n$-tuple of doubly commuting isometries.

\textbf{Szeg\"o positivity:} Looking at the Szeg\"o kernel over $\D^n$, one can check inverse of it is a polynomial, more precisely
\begin{equation}\label{szegopoly}
\mathbb{S}_n^{-1}(\z,\w)=(1-z_1\bar{w}_1)\cdots(1-z_n\bar{w}_n)=\sum_{\bm k\in\Z^n,\, 0\leq \bm k\leq \bm e}(-1)^{|\bm k|}\z^{\bm k}\bar{\w}^{\bm k},
\end{equation}
where, $\bm e=(1,\ldots,1)\in\Z^n$ and for $\bm k=(k_1,\ldots,k_n)\in \mathbb{Z}^n_+$, $\z^{\bm k}=z_1^{k_1}\cdots z_n^{k_n}$ and $\bar{\w}^{\bm k}=\bar{w}_n^{k_n}\cdots\bar{w}_1^{k_1}$. 
For $T\in\T^{q,n}(\clh)$, connecting with \eqref{szegopoly}, we define
\[
\mathbb{S}_n^{-1}(T,T^*):=	\sum_{F\subset\{1,\ldots,n\}}(-1)^{|F|}T_FT_F^*
\]
 where, for $T=(T_1,\ldots,T_n)$ and for an ordered subset $F=\{k_1,\ldots,k_r\}\subset\{1,\ldots,n\}$, $T_F$ and $T_F^*$ are the notations for
 \[
 T_F:=T_{k_1}\cdots T_{k_r} \text{ and } T_F^*:=T_{k_r}^*\cdots T_{k_1}^*.
 \]
 Note that, in the notations $T_F$ and $T_F^*$, maintaining the order of appearance of the operators, involve in the composition, is important and we fix these notations for the rest of the article. We say that $T\in\T^{q,n}(\clh)$ satisfies \textit{Szeg\"o positivity} if 
 \[
\mathbb{S}_n^{-1}(T,T^*)\geq 0.
	\]
It follows from the doubly commutativity of the tuple $(M_{z_1},\ldots, M_{z_n})$ on $H^2_{\cle}(\D^n)$ that it satisfies the Szeg\"o positivity. Next proposition gives us an example of a large class of operators tuples which satisfies Szeg\"o positivity. Before going into the proposition we make a definition.
\begin{defn}
Let $q$ be as in Definition \ref{qcommute}. An $n$-tuple of operators $(T_1,\dots,T_n)$ on a Hilbert space $\mathcal{H}$ is said to be \textit{doubly $q$-\textit{commuting}} if $T_iT_j=q(i,j)T_jT_i$ and $T_iT_j^*=\overline{q(i,j)}T_j^*T_i$ for all $i,j=1,\dots,n$. 
\end{defn}

\NI With the above definition we have the following proposition.
\begin{propn}\label{doublyszego}
Let $T\in\T^{q,n}(\clh)$ be doubly $q$-commuting. Then, $T$ satisfies Szeg\"o positivity.
\end{propn}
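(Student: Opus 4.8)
The plan is to show that, under the doubly $q$-commuting hypothesis, the alternating sum defining $\mathbb{S}_n^{-1}(T,T^*)$ collapses into the single product $\prod_{i=1}^n(I-T_iT_i^*)$, exactly as in the classical doubly commuting case. Once this factorization is in hand, positivity is immediate: each factor $I-T_iT_i^*$ is positive because $T_i$ is a contraction, the factors pairwise commute, and a product of commuting positive operators is positive.

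The first key step is to record the commutation behaviour of the positive operators $P_i:=T_iT_i^*$. I would verify that for $i\neq j$ the operator $P_i$ commutes with both $T_j$ and $T_j^*$. For instance, in $T_iT_i^*T_j$ one moves $T_j$ to the left using $T_i^*T_j=\overline{q(i,j)}T_jT_i^*$ (the adjoint of the second defining identity) and then $T_iT_j=q(i,j)T_jT_i$, producing the scalar $\overline{q(i,j)}\,q(i,j)=1$, so $P_iT_j=T_jP_i$; the analogous computation with $T_j^*$ uses the two adjoint relations and the phases cancel in the same way. Consequently the $P_i$ pairwise commute.

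The second step is the factorization $T_FT_F^*=\prod_{i\in F}P_i$ for every ordered subset $F=\{k_1<\cdots<k_r\}$, which I would prove by induction on $|F|$. Writing $F'=\{k_2,\dots,k_r\}$, we have $T_FT_F^*=T_{k_1}\,(T_{F'}T_{F'}^*)\,T_{k_1}^*$; by the inductive hypothesis $T_{F'}T_{F'}^*=\prod_{j\geq 2}P_{k_j}$ is a product of operators each commuting with $T_{k_1}$ and $T_{k_1}^*$, so it slides out to give $(T_{F'}T_{F'}^*)\,T_{k_1}T_{k_1}^*=\prod_{j\geq 2}P_{k_j}\cdot P_{k_1}=\prod_{i\in F}P_i$, the last equality using commutativity of the $P_i$. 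Substituting into the definition,
\[
\mathbb{S}_n^{-1}(T,T^*)=\sum_{F\subset\{1,\dots,n\}}(-1)^{|F|}\prod_{i\in F}P_i=\prod_{i=1}^n(I-P_i)=\prod_{i=1}^n(I-T_iT_i^*).
\]
Finally, since the commuting positive operators $I-T_iT_i^*$ have a positive product (e.g.\ via the identity $AB=A^{1/2}BA^{1/2}$ for commuting $A,B\geq 0$, applied inductively), we conclude $\mathbb{S}_n^{-1}(T,T^*)\geq 0$.

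I expect the only real work to lie in the first step, namely the careful bookkeeping of the unimodular scalars $q(i,j)$ to confirm that $P_i$ genuinely commutes with $T_j$ and $T_j^*$. The doubly $q$-commuting hypothesis is precisely what makes these phases cancel; the $q$-commuting relation alone, without the $T_iT_j^*=\overline{q(i,j)}T_j^*T_i$ identity, would not suffice. The remaining steps—the inductive factorization and the positivity of a product of commuting positive operators—are routine.
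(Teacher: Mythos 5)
Your proof is correct and takes essentially the same route as the paper: both arguments establish the collapse $T_FT_F^*=\prod_{i\in F}T_iT_i^*$ by cancelling the unimodular phases produced by the doubly $q$-commuting relations, and then factor the alternating sum as $\prod_{i=1}^n(I-T_iT_i^*)$. If anything, your write-up is slightly more complete, since you isolate the commutation of $T_iT_i^*$ with each $T_j$ and $T_j^*$ and use it to justify positivity of the product of commuting positive factors --- a point the paper leaves implicit when it asserts the final inequality ``follows as $T_p$'s are contractions.''
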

\begin{proof} Let $\emptyset\neq F=\{n_1,\ldots,n_k\}\subset\{1,\ldots,n\}$ be an ordered subset. Then,
\begingroup
\allowdisplaybreaks
\begin{align*}
    T_FT_F^*=&  T_{n_1}T_{F\setminus \{n_1\}}T_{F\setminus \{n_1\}}^*T_{n_1}^*\\
    =& \Big(\prod_{m\in F\setminus\{n_1\}} \overline{q(n_1,m)}\Big)T_{n_1}T_{F\setminus \{n_1\}}T_{n_1}^*T_{F\setminus \{n_1\}}^*\\
    =&\Big( \prod_{m\in F\setminus\{n_1\}} \overline{q(n_1,m)}\overline{q(m,n_1)}\Big)T_{n_1}T_{n_1}^*T_{F\setminus \{n_1\}}T_{F\setminus \{n_1\}}^*\\
    =& T_{n_1}T_{n_1}^*T_{F\setminus \{n_1\}}T_{F\setminus \{n_1\}}^*\\
    =& \prod_{p\in F} T_pT_p^*.\hspace{1.5cm}[\text{Repeating the previous steps for remaining elements in $F$}]
\end{align*}
\endgroup
Now,
\begingroup
\allowdisplaybreaks
    \begin{align*}
\mathbb{S}_n^{-1}(T,T^*)=&	\sum_{F\subset\{1,\ldots,n\}}(-1)^{|F|}T_FT_F^*
= \sum_{F\subset\{1,\ldots,n\}}(-1)^{|F|}\prod_{p\in F} T_pT_p^*
=\prod_{p\in F}(I- T_pT_p^*)\geq 0.
    \end{align*}
    \endgroup
The last inequality follows as $T_p$'s are contractions. Hence proved.
\end{proof}

\textbf{$q$-commuting rotational shifts:} We define an $n$-tuple of $q$-commuting operators on the Hardy space $H^2_{\cle}(\D^n)$ as follows: For a given function $q$ as in Definition \ref{qcommute} we denote $[q]_m$, as a tuple in $\mathbb{C}^n$, by 
\[[q]_m=\big(q(m,1)^{\frac{1}{2}},\ldots,q(m,n)^{\frac{1}{2}}\big)\]
for all $m=1,\ldots,n$. With this tuple we define a rotation operator $R_{[q]_m}: H^2_{\cle}(\D^n)\to H^2_{\cle}(\D^n)$ by 
\begin{equation*}
    R_{[q]_m}f(\bm z)=f([q]_m\bm z),\quad \quad \big(f\in H^2_{\cle}(\D^n)\big)
\end{equation*}
where for $\bm z=(z_1,\ldots,z_n)$, $[q]_m\bm z=\big(q(m,1)^{\frac{1}{2}}z_1,\ldots,q(m,n)^{\frac{1}{2}}z_n\big)$. More precisely,
\begin{equation}\label{rotation}
    R_{[q]_m}\Big(\sum_{\bm{k}\in\Z^n}a_{\bm{k}}\z^{\bm{k}}\Big)=\sum_{\bm{k}\in\Z^n}a_{\bm{k}}[q]_m^{\bm{k}}\z^{\bm{k}}
\end{equation}
where for $\bm k=(k_1,\ldots,k_n)$, $[q]_m^{\bm{k}}=\prod_{i=1}^n q(m,i)^{\frac{k_i
}{2}} $.
Now, consider the $n$-tuple of operators
\[
(M_{z_1} R_{[q]_1},\ldots, M_{z_n}R_{[q]_n})  \text{ on }  H^2_{\cle}(\D^n)
\]
for some Hilbert space $\cle$, where $M_{z_i}$'s, for $i=1,\ldots,n$ are the shift operators on the Hardy space $H^2_{\cle}(\D^n)$ as defined earlier. We call this kind of tuple as an \textit{$n$-tuple of rotational shifts}. Some useful properties of this operator tuple are listed in the following proposition.

\begin{propn}\label{properties}
The above defined rotational shifts have the following properties:
\begin{itemize}
    \item [(i)] The tuple $(M_{z_1} R_{[q]_1},\ldots, M_{z_n}R_{[q]_n})\in\T^{q,n}\big(H^2_{\cle}(\D^n)\big)$
    
    \item [(ii)] For each $m=1\ldots,n$, the adjoint of $M_{z_m} R_{[q]_m}$ is given by 
    \begin{equation}\label{adjoint}
    (M_{z_m} R_{[q]_m})^*\Big(\sum_{\bm{k}\in\Z^n}a_{\bm{k}}\z^{\bm{k}}\Big)=\sum_{\bm{k}\in\Z^n}a_{\bm{k}+e_m}\overline{[q]_m}^{\bm{k}}\z^{\bm{k}}
    \end{equation}
    where, $\overline{[q]_m}=\Big(\overline{q(m,1)}^{\frac{1}{2}},\ldots,\overline{q(m,n)}^{\frac{1}{2}}\Big)$ and $e_m=(0,\ldots,0,1,0,\ldots,0)\in\Z^n$ where $1$ appears in the $m$-th position only.
    
    \item [(iii)] For each $m=1\ldots,n$, $M_{z_m} R_{[q]_m}$ is an isometry and for all $f\in H^2_{\cle}(\D^n)$
    \[\big\|\big(M_{z_m} R_{[q]_m}\big)^{*p}(f)\big\|\to 0 \quad\text{as}\quad p\to\infty.\]
    \item [(iv)] The tuple $(M_{z_1} R_{[q]_1},\ldots, M_{z_n}R_{[q]_n})$ is doubly $q$-commuting and hence satisfies Szeg\"o positivity.
\end{itemize}
\end{propn}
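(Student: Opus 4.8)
The plan is to reduce everything to a single intertwining identity between the rotations and the shifts. Acting on a monomial $a_{\bm k}\z^{\bm k}$ one checks directly from \eqref{rotation} that
\[
R_{[q]_i}M_{z_j}=q(i,j)^{\frac{1}{2}}M_{z_j}R_{[q]_i}\qquad(i,j=1,\ldots,n),
\]
since applying $M_{z_j}$ and then $R_{[q]_i}$ raises the exponent in the $j$-th slot and hence produces the extra scalar $q(i,j)^{\frac{1}{2}}$. Two further elementary facts will be used throughout: each $R_{[q]_m}$ is unitary (it is a modulus-one Fourier multiplier) and, because $q(m,m)=1$, it commutes with its own shift $M_{z_m}$; and the shifts $(M_{z_1},\ldots,M_{z_n})$ are doubly commuting while the $R_{[q]_m}$'s commute among themselves (being diagonal). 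Finally, I fix once and for all a choice of square roots satisfying $q(i,j)^{\frac{1}{2}}q(j,i)^{\frac{1}{2}}=1$, which is possible because $q(j,i)=\overline{q(i,j)}$; with this convention $q(i,j)^{\frac{1}{2}}/q(j,i)^{\frac{1}{2}}=q(i,j)$.

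With these in hand, (i) and (iii) are quick. Since $M_{z_m}$ is an isometry and $R_{[q]_m}$ is unitary, each $M_{z_m}R_{[q]_m}$ is an isometry, giving the norm bound in (i) and the first assertion of (iii). For the $q$-commutation, I push every rotation to the right past the shifts using the displayed identity: this turns $(M_{z_i}R_{[q]_i})(M_{z_j}R_{[q]_j})$ into $q(i,j)^{\frac{1}{2}}M_{z_i}M_{z_j}R_{[q]_i}R_{[q]_j}$ and, symmetrically, $(M_{z_j}R_{[q]_j})(M_{z_i}R_{[q]_i})$ into $q(j,i)^{\frac{1}{2}}M_{z_i}M_{z_j}R_{[q]_i}R_{[q]_j}$; dividing and using the branch convention yields $T_iT_j=q(i,j)T_jT_i$, so the tuple lies in $\T^{q,n}\big(H^2_{\cle}(\D^n)\big)$. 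For the second part of (iii), commutativity of $R_{[q]_m}$ with $M_{z_m}$ gives $(M_{z_m}R_{[q]_m})^{*p}=R_{[q]_m}^{*p}M_{z_m}^{*p}$, whence $\|(M_{z_m}R_{[q]_m})^{*p}f\|=\|M_{z_m}^{*p}f\|$ by unitarity of $R_{[q]_m}$; the latter is the square root of a tail of the convergent series $\sum_{\bm k}\|a_{\bm k}\|^2$ and therefore tends to $0$.

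For (ii), I compute the adjoint as $(M_{z_m}R_{[q]_m})^*=R_{[q]_m}^*M_{z_m}^*$: the backward shift $M_{z_m}^*$ lowers the $m$-th exponent (reindexing $\bm k\mapsto\bm k+e_m$), and $R_{[q]_m}^*$ multiplies the $\bm k$-th coefficient by $\overline{[q]_m}^{\bm k}$, giving exactly \eqref{adjoint}. For (iv), taking adjoints in the basic intertwining identity yields $R_{[q]_i}M_{z_j}^*=\overline{q(i,j)^{\frac{1}{2}}}\,M_{z_j}^*R_{[q]_i}$; combining this with the double commutativity of the shifts ($M_{z_i}M_{z_j}^*=M_{z_j}^*M_{z_i}$ for $i\neq j$) and the commutativity of the rotations, the same bookkeeping as in (i) gives $T_iT_j^*=\overline{q(i,j)}\,T_j^*T_i$ for $i\neq j$. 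Thus the tuple is doubly $q$-commuting, and Szeg\"o positivity follows immediately from Proposition \ref{doublyszego}.

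The only genuinely delicate point is the scalar bookkeeping: the computations produce ratios such as $q(i,j)^{\frac{1}{2}}/q(j,i)^{\frac{1}{2}}$ and $\overline{q(i,j)^{\frac{1}{2}}}/\overline{q(j,i)^{\frac{1}{2}}}$, and these collapse to $q(i,j)$ and $\overline{q(i,j)}$ respectively only under the consistent branch choice $q(i,j)^{\frac{1}{2}}q(j,i)^{\frac{1}{2}}=1$ (note the subtlety in the anti-commuting case $q(i,j)=-1$, where a careless choice would give $+1$ instead of $-1$). Once this convention is fixed, all four parts reduce to routine manipulations with the two commutation rules above.
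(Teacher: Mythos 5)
Your proof is correct, and it takes a genuinely cleaner route than the paper's, though the underlying computation is the same. The paper proves each of the four parts by a separate coefficient-level expansion of power series: it expands $(M_{z_i}R_{[q]_i})(M_{z_j}R_{[q]_j})$ and its reverse for (i), pairs against a second series to identify the adjoint in (ii), bounds $\|(M_{z_m}R_{[q]_m})^{*p}f\|$ by a tail sum for (iii), and expands the mixed products $(M_{z_i}R_{[q]_i})(M_{z_j}R_{[q]_j})^*$ again for (iv), each time juggling the phases $[q]_i^{\bm k}$ by hand. You instead factor all of the coefficient work into the single intertwining identity $R_{[q]_i}M_{z_j}=q(i,j)^{\frac{1}{2}}M_{z_j}R_{[q]_i}$ and then argue by operator algebra, using unitarity and mutual commutativity of the rotations and double commutativity of the shifts. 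This buys three things. First, economy: (i), (ii) and (iv) become short consequences of one lemma rather than three separate expansions. Second, your argument for (iii) is sharper: since $R_{[q]_m}$ commutes with $M_{z_m}$ (as $q(m,m)=1$), you get the exact identity $\|(M_{z_m}R_{[q]_m})^{*p}f\|=\|M_{z_m}^{*p}f\|$ rather than the paper's inequality (whose displayed limit, incidentally, contains the typo ``$\to\infty$'' where ``$\to 0$'' is meant). Third, and most substantively, you make explicit the branch convention $q(i,j)^{\frac{1}{2}}q(j,i)^{\frac{1}{2}}=1$ that is genuinely required for the phases to collapse to $q(i,j)$ and $\overline{q(i,j)}$; the paper uses this silently, e.g.\ when it multiplies by $\overline{q(j,i)}^{\frac{1}{2}}$ and treats that scalar both as the inverse of $q(j,i)^{\frac{1}{2}}$ and as $q(i,j)^{\frac{1}{2}}$, which is consistent only under your convention, and your remark about the anti-commuting case $q(i,j)=-1$ is exactly the point. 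One last detail in your favor: restricting the adjoint relation in (iv) to $i\neq j$ is the correct reading of ``doubly $q$-commuting'' (for $i=j$ it would force normality, which fails for a non-unitary isometry); the paper's computation is stated for arbitrary $i,j$ but is likewise only valid off the diagonal.
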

 
\begin{proof}
(i)  To prove $q$-commutativity we calculate 
$(M_{z_i}R_{[q]_i})(M_{z_j}R_{[q]_j})$ and $(M_{z_j}R_{[q]_j})(M_{z_i}R_{[q]_i})$ separately, for arbitrary $i,j=1,\ldots,n$.
\begingroup
\allowdisplaybreaks
\begin{align*}
    (M_{z_i}R_{[q]_i})(M_{z_j}R_{[q]_j})\Big(\sum_{\bm k\in\Z^n}a_{\bm k}\z^{\bm k}\Big)=&(M_{z_i}R_{[q]_i})M_{z_j}\Big(\sum_{\bm k\in\Z^n}a_{\bm k}{[q]_j}^{\bm k}\z^{\bm k}\Big)\\
    =&(M_{z_i}R_{[q]_i})\Big(\sum_{\bm k\in\Z^n}a_{\bm k}{[q]_j}^{\bm k}\z^{{\bm k}+e_j}\Big)\\
    =&\sum_{\bm k\in\Z^n}a_{\bm k}[q]_j^{\bm k}[q]_i^{{\bm k}+e_j}\z^{{\bm k}+e_j+e_i}\\
    =&\sum_{\bm k\in\Z^n}a_{\bm k}[q]_j^{\bm k}[q]_i^{\bm k}q(i,j)^{\frac{1}{2}}\z^{{\bm k}+e_j+e_i}.
\end{align*}
\endgroup
Similarly,
\begingroup
\allowdisplaybreaks
\begin{align*}
    (M_{z_j}R_{[q]_j})(M_{z_i}R_{[q]_i})\Big(\sum_{\bm k\in\Z^n}a_{\bm k}\z^{\bm k}\Big)=& (M_{z_j}R_{[q]_j})\Big(\sum_{\bm k\in\Z^n}a_{\bm k}{[q]_i}^{\bm k}\z^{{\bm k}+e_i}\Big)\\
    =&\sum_{\bm k\in\Z^n}a_{\bm k}[q]_i^{\bm k}[q]_j^{{\bm k}+e_i}\z^{{\bm k}+e_i+e_j}\\
    =&\sum_{\bm k\in\Z^n}a_{\bm k}[q]_i^{\bm k}[q]_j^{\bm k}q(j,i)^{\frac{1}{2}}\z^{{\bm k}+e_j+e_i}.
\end{align*}
\endgroup
Since, $q(i,j)=\overline{q(j,i)}$ and $|q(i,j)|=1$, we have from the above
\[
q(j,i)^{\frac{1}{2}}(M_{z_i}R_{[q]_i})(M_{z_j}R_{[q]_j})=q(i,j)^{\frac{1}{2}}(M_{z_j}R_{[q]_j})(M_{z_i}R_{[q]_i}).
\]
Multiplying both sides by $\overline{q(j,i)}^{\frac{1}{2}}$ we get
\[
(M_{z_i}R_{[q]_i})(M_{z_j}R_{[q]_j})=q(i,j)(M_{z_j}R_{[q]_j})(M_{z_i}R_{[q]_i}).
\]
Therefore, $(M_{z_1} R_{[q]_1},\ldots, M_{z_n}R_{[q]_n})\in \T^{q,n}\big(H^2_{\cle}(\D^n)\big)$.

(ii) For each $m=1,\dots,n$,
\begingroup
\allowdisplaybreaks
\begin{align*}
    \Big<(M_{z_m}R_{[q]_m})\Big(\sum_{\bm k\in\Z^n}a_{\bm k}\z^{\bm k}\Big), \sum_{\bm l\in\Z^n}b_{\bm l}\z^{\bm l} \Big>=&\Big<\sum_{\bm k\in\Z^n}a_{\bm k}[q]_m^{\bm k}\z^{{\bm k}+e_m},\sum_{\bm l\in\Z^n}b_{\bm l}\z^{\bm l}\Big>\\
    =&\sum_{\bm k\in\Z^n}[q]_m^{\bm k}\Big<a_{\bm k},b_{{\bm k}+e_m}\Big>\\
    =&\Big<\sum_{\bm k\in\Z^n}a_{\bm k}\z^{{\bm k}},\sum_{\bm l\in\Z^n}b_{{\bm l}+e_m}\overline{[q]_m}^{\bm l}\z^{\bm l}\Big>.
\end{align*}
\endgroup

(iii) For each $m=1,\ldots,n$, rotation operators $R_{[q]_m}$,  are isometries, which is easy to see from (\ref{rotation}), so are the operators $M_{z_m}R_{[q]_m}$. Also, for each $m=1,\dots,n$,
\[
\Big\|(M_{z_m} R_{[q]_m})^{*p}\Big(\sum_{\bm{k}\in\Z^n}a_{\bm{k}}\z^{\bm{k}}\Big)\Big\|^2\leq \sum_{\bm{k}\in\Z^n}\|a_{\bm{k}+pe_m}\|^2\to\infty\quad\text{as}\, p\to\infty.
\]

(iv) For arbitrary $i,j=1,\ldots,n$,
\begingroup
\allowdisplaybreaks
\begin{align*}
    (M_{z_i}R_{[q]_i})(M_{z_j}R_{[q]_j})^*\Big(\sum_{\bm k\in\Z^n}a_{\bm k}\z^{\bm k}\Big)
    =&(M_{z_i}R_{[q]_i})\Big(\sum_{\bm k\in\Z^n}a_{\bm{k}+e_j}\overline{[q]_j}^{\bm{k}}\z^{\bm{k}}\Big)\\
    =& \sum_{\bm k\in\Z^n}a_{\bm{k}+e_j}[q]_i^{\bm{k}}\overline{[q]_j}^{\bm{k}}\z^{{\bm k}+e_i}.
\end{align*}
\endgroup
On the other hand,
\begingroup
\allowdisplaybreaks
\begin{align*}
    (M_{z_j}R_{[q]_j})^*(M_{z_i}R_{[q]_i})\Big(\sum_{\bm k\in\Z^n}a_{\bm k}\z^{\bm k}\Big)
    =&(M_{z_j}R_{[q]_j})^*\Big(\sum_{\bm k\in\Z^n}a_{\bm k}[q]_i^{\bm{k}}\z^{{\bm k}+e_i}\Big)\\
    =& \sum_{\bm k\in\Z^n}a_{{\bm k}+e_j}[q]_i^{\bm{k}}q(i,j)^{\frac{1}{2}}\overline{[q]_j}^{\bm{k}}q(i,j)^{\frac{1}{2}}\z^{{\bm k}+e_i}.
 \end{align*}
 \endgroup
So,

\begin{equation}\label{doubly commuting}
(M_{z_i}R_{[q]_i})(M_{z_j}R_{[q]_j})^*= \overline{q(i,j)}(M_{z_j}R_{[q]_j})^*(M_{z_i}R_{[q]_i}).
\end{equation}
The above relation and part (i) of this proposition together imply that  $(M_{z_1} R_{[q]_1},\ldots, M_{z_n}R_{[q]_n})$ is doubly $q$-commuting. Finally, we conclude from Proposition \ref{doublyszego} that the tuple satisfies Szeg\"o positivity.
\end{proof}
\textbf{Isometric dilations.} For given $q$ as in Definition \ref{qcommute} and for a Hilbert space $\clh$, we choose a tuple $T=(T_1,\ldots, T_n)\in\T^{q,n}(\clh)$. Under certain assumptions we shall find isometric dilation of $T$. Our aim is to dilate $T$ in the following way: We shall find a tuple of isometries $V=(V_1,\ldots, V_n)\in\T^{q,n}(\clk)$ on some Hilbert space $\clk$ and an isometry $\Pi:\clh\to\clk$ such that
\begin{equation}\label{pi}
  \Pi T_m^*=V_m^* \Pi  
\end{equation}
for all $m=1,\ldots,n$. Taking $\clr=\ran \Pi$, the above equation tells us that $(T_1,\ldots,T_n)$ is unitarily equivalent to the tuple $(P_{\clr}V_1|_{\clr},\ldots, P_{\clr}V_n|_{\clr})$. Also,
\begin{equation}\label{co}
T_m^*\cong V_m^*|_{\clr} 
\end{equation}
for all $m=1,\ldots,n$ and hence
\[
T^{*\bm k}\cong V^{*\bm k}|_{\clr},\text{ consequently } T^{\bm k}\cong P_{\clr}V^{\bm k}|_{\clr}
\]
for all $\bm k\in\Z^n$, where for $\bm k=(k_1,\ldots,k_n)\in\Z^n$, $T^{\bm k}=T_1^{k_1}\cdots T_n^{k_n}$ and $T^{*\bm k}=T_n^{*k_n}\cdots T_1^{*k_1}$  (see \cite{BDHS} for more detailed explanation). This shows that $V$ is a dilation of $T$ and the relation $(\ref{co})$ tells us that $V$ is in particular a co-extension of $T$. Note that, in the above notations of $T^{\bm k}$ and $T^{*\bm k}$, maintaining the order of appearance of the operators which involve in the composition, is important and we shall use these notations very often in the rest of the article.

The following theorem is very crucial for our purpose, as well as, after adding some extra assumption in the hypothesis, it provides isometric dilations for certain class of operator tuples which shall be discussed right after the theorem.  
\begin{thm}\label{dilation for szego tuple}
Let $T=(T_1,\ldots,T_n)\in\T^{q,n}(\clh)$ be a tuple which satisfies Szeg\"o positivity. Denote,
\[
D_{T^*}:=\mathbb{S}_n^{-1}(T,T^*)^{\frac{1}{2}} \text{ and } \cld_{T^*}:=\overline{ran}\, \mathbb{S}_n^{-1}(T,T^*).
\]
Then, there is a map $\Pi: \clh\to  H^2_{\cld_{T^*}}(\D^n)$ defined by
\[
\Pi h(\z)= \sum_{\bm k\in\Z^n}\Big(\prod_{1\leq i<j\leq n}q(i,j)^{\frac{k_ik_j}{2}}\Big)D_{T^*}T^{*\bm k}h\z^{\bm k}
\]
such that 
\begin{equation}\label{key eq}
\Pi T_m^*=(M_{z_m}R_{[q]_m})^*\Pi
\end{equation}
for all $m=1,\ldots,n$ and
\[\|\Pi h\|^2=\lim_{l\to\infty}\sum_{F\subset\{1,\ldots,n\}}(-1)^{|F|}\|T_F^l h\|^2\]
for all $h\in\clh$.
\end{thm}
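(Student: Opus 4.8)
The plan is to establish the two assertions --- the intertwining \eqref{key eq} and the norm formula --- separately, checking en route that $\Pi$ maps into $H^2_{\cld_{T^*}}(\D^n)$; the only inputs are Szeg\"o positivity (which makes $D_{T^*}=\mathbb{S}_n^{-1}(T,T^*)^{\frac12}\ge0$ meaningful) and the two scalar rules $q(i,i)=1$ and $q(i,j)q(j,i)=|q(i,j)|^2=1$. Write $\beta_{\bm k}=\prod_{1\le i<j\le n}q(i,j)^{k_ik_j/2}$ for the unimodular weight appearing in $\Pi$, and $P_{\bm j}:=T^{\bm j}T^{*\bm j}$. The whole computation rests on one reordering identity: from $T_iT_j=q(i,j)T_jT_i$ one gets $T^{\bm k}T^{\bm j}=\big(\prod_{1\le i<l\le n}q(l,i)^{k_lj_i}\big)T^{\bm k+\bm j}$, and taking adjoints produces the complex-conjugate phase in $T^{*\bm j}T^{*\bm k}$. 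Multiplying the two, the phases cancel because $|q|=1$, so $T^{\bm k}P_{\bm j}T^{*\bm k}=P_{\bm k+\bm j}$, \emph{exactly as in the commuting case}. In particular $T_FT_F^*=P_{\chi_F}$, where $\chi_F\in\Z^n$ is the indicator of $F$, whence $\mathbb{S}_n^{-1}(T,T^*)=\sum_F(-1)^{|F|}P_{\chi_F}$ and $T^{\bm k}\mathbb{S}_n^{-1}(T,T^*)T^{*\bm k}=\sum_F(-1)^{|F|}P_{\bm k+\chi_F}$.

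For the norm identity I would compute $\|\Pi h\|^2=\sum_{\bm k\in\Z^n}\|D_{T^*}T^{*\bm k}h\|^2=\sum_{\bm k\in\Z^n}\langle T^{\bm k}\mathbb{S}_n^{-1}(T,T^*)T^{*\bm k}h,h\rangle$, a series of nonnegative terms. Using the last identity and iterating the one-variable telescoping $\sum_{k_i=0}^{L-1}\big(P_{\cdots}-P_{\cdots+e_i}\big)$ in each coordinate, the partial sum over the box $0\le\bm k\le(L-1)\bm e$ collapses to
\[
\sum_{0\le\bm k\le(L-1)\bm e}T^{\bm k}\mathbb{S}_n^{-1}(T,T^*)T^{*\bm k}=\sum_{F\subset\{1,\ldots,n\}}(-1)^{|F|}P_{L\chi_F}.
\]
Since $\langle P_{L\chi_F}h,h\rangle=\|T^{*L\chi_F}h\|^2=\|T_F^{*L}h\|^2$ (the unimodular reordering phase relating $T_F^{L}$ and $T^{L\chi_F}$ drops out under the norm), these increasing partial sums are bounded above by $\sum_{|F|\text{ even}}\|h\|^2=2^{n-1}\|h\|^2$; hence the series converges, so $\Pi h\in H^2_{\cld_{T^*}}(\D^n)$ and $\|\Pi h\|^2=\lim_{L\to\infty}\sum_F(-1)^{|F|}\|T_F^{*L}h\|^2$, which is the asserted formula.

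For \eqref{key eq} I would compare $\z^{\bm k}$-coefficients. On the left, the reordering rule gives $T^{*\bm k}T_m^*=\big(\prod_{i=1}^{m-1}q(i,m)^{k_i}\big)T^{*(\bm k+e_m)}$, so the coefficient of $\z^{\bm k}$ in $\Pi T_m^*h$ is $\beta_{\bm k}\big(\prod_{i<m}q(i,m)^{k_i}\big)D_{T^*}T^{*(\bm k+e_m)}h$. On the right, the adjoint formula \eqref{adjoint} of Proposition \ref{properties} gives the coefficient of $\z^{\bm k}$ in $(M_{z_m}R_{[q]_m})^*\Pi h$ as $\beta_{\bm k+e_m}\,\overline{[q]_m}^{\bm k}\,D_{T^*}T^{*(\bm k+e_m)}h$. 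Equality then reduces to the scalar identity $\beta_{\bm k}\prod_{i<m}q(i,m)^{k_i}=\beta_{\bm k+e_m}\,\overline{[q]_m}^{\bm k}$; computing $\beta_{\bm k+e_m}/\beta_{\bm k}=\prod_{i<m}q(i,m)^{k_i/2}\prod_{j>m}q(m,j)^{k_j/2}$ and $\overline{[q]_m}^{\bm k}=\prod_{i}q(i,m)^{k_i/2}$, the factors with index $>m$ pair off via $q(m,j)q(j,m)=1$ and the identity follows.

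The work is entirely phase bookkeeping rather than analysis: one must track the $q$-weight generated by each transposition and confirm that it cancels in the telescoping (through $q(i,j)q(j,i)=1$) and matches in the intertwining (through the precise $\beta_{\bm k}$). The symmetric weight $\prod_{i<j}q(i,j)^{k_ik_j/2}$ is in fact reverse-engineered so that the left- and right-hand coefficients in \eqref{key eq} agree; with any other weight the intertwining breaks, even though the norm identity --- being insensitive to unimodular phases --- would survive. This is the step I expect to be the main obstacle, while the remaining points (convergence, passing to the limit) are routine.
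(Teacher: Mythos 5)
Your proposal is correct and takes essentially the same route as the paper: the intertwining \eqref{key eq} is checked coefficient-by-coefficient using the adjoint formula \eqref{adjoint} and the same square-root phase cancellations ($q(m,j)^{1/2}q(j,m)^{1/2}=1$), and the norm identity comes from expanding $\|D_{T^*}T^{*\bm k}h\|^2$ by Szeg\"o positivity and collapsing the box partial sums to the corner terms $\|T_F^{*l}h\|^2$, which is exactly the paper's Ambrozie--M\"uller computation, merely repackaged as an $n$-fold telescoping of $P_{\bm j}=T^{\bm j}T^{*\bm j}$. Your version has the minor merits of making the convergence of the series (hence $\Pi h\in H^2_{\cld_{T^*}}(\D^n)$) explicit, which the paper leaves implicit, and of producing the limit with $T_F^{*l}$, which is what the paper's proof actually establishes (the $T_F^{l}$ in the theorem's statement is a typo).
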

\begin{proof}
First we shall prove the identity (\ref{key eq}). Now for $h\in \mathcal{H}$, we have
\begingroup
\allowdisplaybreaks
\begin{align*}
   (M_{z_m}&R_{[q]_m})^*\Pi h(\z)\\
   =&(M_{z_m}R_{[q]_m})^*\sum_{\bm k\in\Z^n}\Big(\prod_{1\leq i<j\leq n}q(i,j)^{\frac{k_ik_j}{2}}\Big)D_{T^*}T^{*\bm k}h\z^{\bm k}\\
   =& \sum_{\bm k\in\Z^n}\Big(\prod_{\substack{1\leq i<j\leq n\\i,j\neq m}}q(i,j)^{\frac{k_ik_j}{2}}\Big)\Big(\prod_{1\leq i<m}q(i,m)^{\frac{k_i(k_m+1)}{2}}\Big)\Big(\prod_{m<j\leq n}q(m,j)^{\frac{(k_m+1)k_j}{2}} \Big)\\
   &\hspace{1cm}\Big(\prod_{1\leq j\leq n}\overline{q(m,j)}^{\frac{k_j}{2}}\Big)D_{T^*}T^{*{\bm k}+e_m}h\z^{\bm k}\hspace{6.8cm}[\text{by}\, (\ref{adjoint})]\\
   =&\sum_{\bm k\in\Z^n}\Big(\prod_{\substack{1\leq i<j\leq n\\i,j\neq m}}q(i,j)^{\frac{k_ik_j}{2}}\Big)\Big(\prod_{1\leq i<m}q(i,m)^{\frac{k_i(k_m+1)}{2}}\Big)\Big(\prod_{m<j\leq n}q(m,j)^{\frac{k_mk_j}{2}} \Big)\\&\hspace{1cm}\Big(\prod_{1\leq j\leq m}\overline{q(m,j)}^{\frac{k_j}{2}}\Big)D_{T^*}T^{*{\bm k}+e_m}h\z^{\bm k}\hspace{4.3cm}[\text{as }\overline{q(m,j)}q(m,j)=1]\\
   =& \sum_{\bm k\in\Z^n}\Big(\prod_{\substack{1\leq i<j\leq n\\i,j\neq m}}q(i,j)^{\frac{k_ik_j}{2}}\Big)\Big(\prod_{1\leq i<m}q(i,m)^{\frac{k_ik_m}{2}}\Big)\Big(\prod_{m<j\leq n}q(m,j)^{\frac{k_mk_j}{2}}\Big)\\
   &\hspace{1cm}\Big(\prod_{1\leq j<m}{q(j,m)}^{k_j}\Big)D_{T^*}T^{*{\bm k}+e_m}h\z^{\bm k}\\
   =& \sum_{\bm k\in\Z^n}\Big(\prod_{1\leq i<j\leq n}q(i,j)^{\frac{k_ik_j}{2}}\Big)\Big(\prod_{1\leq j<m}q(j,m)^{k_j}\Big)D_{T^*}T^{*{\bm k}+e_m}h\z^{\bm k}\\
   =&\sum_{\bm k\in\Z^n}\Big(\prod_{1\leq i<j\leq n}q(i,j)^{\frac{k_ik_j}{2}}\Big)\Big(\prod_{1\leq j<m}q(j,m)^{k_j}\Big)\Big(\prod_{1\leq j<m}\overline{q(j,m)}^{k_j}\Big)D_{T^*}T^{*{\bm k}}T_m^*h\z^{\bm k}\\
   &\hspace{3.9cm}[\text{since},\,\,T_jT_m=q(j,m)T_mT_j\,\,\, \text{implies}\,\,\, T_m^*T_j^*=\overline{q(j,m)}T_j^*T_m^*\, ]\\
   =&\sum_{\bm k\in\Z^n}\Big(\prod_{1\leq i<j\leq n}q(i,j)^{\frac{k_ik_j}{2}}\Big)D_{T^*}T^{*{\bm k}}T_m^*h\z^{\bm k}\\
   =&\Pi T_m^*h(\z).
\end{align*}
\endgroup
This proves the identity (\ref{key eq}). For the remaining part we follow the technique of \cite{AM}.
\begingroup
\allowdisplaybreaks
\begin{align*}
    \|\Pi h\|^2=&\sum_{\bm k\in\Z^n}\Big\|\Big(\prod_{1\leq i<j\leq n}q(i,j)^{\frac{k_ik_j}{2}}\Big)D_{T^*}T^{*\bm k}h\Big\|\\
    =& \sum_{\bm k\in\Z^n}\|D_{T^*}T^{*\bm k}h\|\\
    =& \lim_{l\to\infty}\sum_{\substack{\bm k\in\Z^n\\\max k_j\leq l-1}}\sum_{F\subset\{1,\ldots,n\}}(-1)^{|F|}\|T_F^*T^{*\bm k}h\|\\
    =& \lim_{l\to\infty}\sum_{\substack{\bm{\beta}\in\Z^n\\\max \beta_j\leq l}}\|T^{*\bm{\beta}}h\|^2\sum_{\substack{\bm k\leq\bm{\beta}\\ \max (\beta_j-k_j)\leq 1\\\max k_j\leq l-1}}(-1)^{|\bm{\beta}|-|\bm k|}.
\end{align*}
\endgroup
Observe that, for all $\bm{\beta}\in\Z^n$,
\[
\sum_{\substack{\bm k\leq\bm{\beta}\\ \max (\beta_j-k_j)\leq 1\\\max k_j\leq l-1}}(-1)^{|\bm{\beta}|-|\bm k|}=0
\]
except the case that $\{\beta_1,\ldots,\beta_n\}\subset\{0,l\}$ and if $\{\beta_1,\ldots,\beta_n\}\subset\{0,l\}$ the sum is equal to $(-1)^{|\text{supp}\, \beta|}$. So, 
\[
\|\Pi h\|^2=\lim_{l\to\infty}\sum_{F\subset\{1,\ldots,n\}}(-1)^{|F|}\|T^{*l}_Fh\|^2,
\]
for all $h\in\clh$.
\end{proof}
A tuple of bounded operators $(T_1,\ldots,T_n)$ on a Hilbert space $\clh$ is said to be \textit{pure} if for all $i=1,\ldots,n$ and for all $h\in\clh$, $\|T_i^{*m}(h)\|\to 0$ as $m\to\infty$. We have seen in Proposition \ref{properties} that $(M_{z_1} R_{[q]_1},\ldots, M_{z_n}R_{[q]_n})\in\T^{q,n}\big(H^2_{\cle}(\D^n)\big)$ is a pure tuple of isometries. In the following theorem, we describe explicit isometric dilations for a class of pure tuples in $\T^{q,n}(\clh)$ which generalizes the dilation result of \cite{CV1} for commutative case and it is a particular case of above theorem.
\begin{thm}\label{pure-dil}
Let $T=(T_1,\ldots,T_n)\in\T^{q,n}(\clh)$ be a pure tuple satisfying Szeg\"o positivity. Let $D_{T^*}$, $\cld_{T^*}$ and $\Pi:\clh\to H^2_{\cld_{T^*}}(\D^n)$ be as in Theorem \ref{dilation for szego tuple}. Then, $\Pi$ is an isometry and \begin{equation}\label{key eq1}
\Pi T_m^*=(M_{z_m}R_{[q]_m})^*\Pi.
\end{equation}
for all $m=1,\ldots,n$. In other words, $(M_{z_1} R_{[q]_1},\ldots, M_{z_n}R_{[q]_n})$ on $H^2_{\cld_{T^*}}(\D^n)$ is an isometric dilation of the $q$-commuting tuple $T=(T_1,\ldots,T_n)$.
\end{thm}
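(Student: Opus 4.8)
The plan is to derive Theorem \ref{pure-dil} as a direct specialization of Theorem \ref{dilation for szego tuple}. Since the hypotheses already give us the map $\Pi:\clh\to H^2_{\cld_{T^*}}(\D^n)$ together with the intertwining identity \eqref{key eq} and the norm formula
\[
\|\Pi h\|^2=\lim_{l\to\infty}\sum_{F\subset\{1,\ldots,n\}}(-1)^{|F|}\|T_F^{*l}h\|^2,
\]
essentially everything I need is in hand. The identity \eqref{key eq1} is literally \eqref{key eq}, so the only genuine content left is to verify that $\Pi$ is an isometry, i.e. that the limit on the right-hand side equals $\|h\|^2$ for every $h\in\clh$.

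First I would isolate the term $F=\emptyset$ in the norm formula, which contributes $\|T_\emptyset^{*l}h\|^2=\|h\|^2$ (with the convention $T_\emptyset=I$), and show that every nonempty $F$ contributes $0$ in the limit. For a nonempty ordered subset $F=\{k_1,\ldots,k_r\}$ one has $T_F^{*l}=T_{k_r}^{*l}\cdots T_{k_1}^{*l}$ up to a unimodular $q$-scalar coming from the $q$-commutation relations; since $|q(i,j)|=1$, that scalar does not affect the norm, so $\|T_F^{*l}h\|=\|T_{k_r}^{*l}\cdots T_{k_1}^{*l}h\|$. Purity gives, for each fixed index $i$, that $\|T_i^{*l}g\|\to 0$ as $l\to\infty$ for every $g$. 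The cleanest way to finish is to fix the innermost index, say $j=k_1$, and bound $\|T_F^{*l}h\|\le \|T_{k_r}^{*l}\|\cdots\|T_{k_2}^{*l}\|\,\|T_{k_1}^{*l}h\|\le \|T_{k_1}^{*l}h\|$, using that each $T_i$ is a contraction so $\|T_i^{*l}\|\le 1$; then purity forces $\|T_{k_1}^{*l}h\|\to 0$, hence $\|T_F^{*l}h\|\to 0$. Consequently every nonempty $F$ drops out of the limit and $\|\Pi h\|^2=\|h\|^2$, so $\Pi$ is an isometry.

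Having established that $\Pi$ is an isometry satisfying $\Pi T_m^*=(M_{z_m}R_{[q]_m})^*\Pi$, I would then invoke Proposition \ref{properties}, which guarantees that $(M_{z_1}R_{[q]_1},\ldots,M_{z_n}R_{[q]_n})$ is a pure tuple of isometries lying in $\T^{q,n}\big(H^2_{\cld_{T^*}}(\D^n)\big)$. Taking $\clr=\ran\Pi$ and following the discussion preceding Theorem \ref{dilation for szego tuple}, the intertwining relation yields $T_m^*\cong (M_{z_m}R_{[q]_m})^*|_{\clr}$ for all $m$, and hence $T^{\bm k}\cong P_{\clr}(M_zR)^{\bm k}|_{\clr}$ for all $\bm k\in\Z^n$. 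This is exactly the assertion that the rotational-shift tuple is a (co-extension) isometric dilation of $T$, completing the proof.

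The only step requiring genuine care is the isometry claim, and within it the main obstacle is the bookkeeping: handling the $q$-scalar factors correctly when rewriting $T_F^{*l}$, and making sure that the contractivity bound $\|T_i^{*l}\|\le 1$ is applied to the right factors so that a single application of purity on the innermost index suffices. Everything else is a clean invocation of the preceding results.
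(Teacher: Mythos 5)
Your proposal is correct and follows essentially the same route as the paper: everything except the isometry claim is quoted from Theorem \ref{dilation for szego tuple}, and the isometry follows by splitting off the $F=\emptyset$ term in the norm formula and using purity to kill the finitely many nonempty-$F$ terms. In fact you supply a detail the paper leaves implicit, namely the rearrangement $(T_F^*)^l=\mu\,T_{k_r}^{*l}\cdots T_{k_1}^{*l}$ with $|\mu|=1$ via the $q$-commutation relations, followed by the contraction bound reducing to purity of a single $T_{k_1}$.
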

\begin{proof}
Only to prove that $\Pi$ is an isometry as everything else follows from Theorem \ref{dilation for szego tuple}. Observe that,
\[
\|\Pi h\|^2=\lim_{l\to\infty}\sum_{F\subset\{1,\ldots,n\}}(-1)^{|F|}\|T^{*l}_Fh\|^2=\|h\|^2+\lim_{l\to\infty}\sum_{\emptyset\neq F\subset\{1,\ldots,n\}}(-1)^{|F|}\|T^{*l}_Fh\|^2.
\]
Now if $T$ is pure, the last term of the above equality is $0$. Therefore, $\Pi$ is an isometry.
\end{proof}
Generalizing Szeg\"o positivity, we say that $T\in\T^{q,n}(\clh)$ satisfies \textit{Brehmer positivity} if for all $G\subset\{1,\ldots,n\}$
\[
\sum_{F\subset G}(-1)^{|F|}T_FT_F^*\geq 0.
\]
With this we define another class of operator tuple, namely $\mathfrak{B}^{q,n}(\clh)$, as the set of all $T\in\T^{q,n}(\clh)$ such that $T$ satisfies Brehmer positivity, that is,
\[
\mathfrak{B}^{q,n}(\clh)=\Big\{T\in \T^{q,n}(\clh):\sum_{F\subset G}(-1)^{|F|}T_FT_F^*\geq 0,\,\, \text{for all}\,\,G\subset\{1,\ldots,n\}\Big\}.
\]
This class naturally extends the Brehmer class of commuting operator tuples having isometric dilations. We find explicit isometric dilations for the class $\mathfrak{B}^{q,n}(\clh)$ and in order to do that we mostly follow the explicit construction from \cite{AM}. To prove the dilation theorem for this class, we establish several technical lemmas. Before proceeding further we fix some more notations which will be used afterwords. For $T\in \mathfrak{B}^{q,n}(\clh)$ and for a subset $G=\{j_1,\ldots,j_r\}$ of $\{1,\ldots,n\}$, we denote
\[
T(G):=(T_{j_1},\ldots,T_{j_r}),\quad \mathbb{S}_r^{-1}(T(G),T(G)^*)=\sum_{F\subset G}(-1)^{|F|}T_FT_F^*
\]
and
\[
D_{T(G)^*}:=\mathbb{S}_r^{-1}(T(G),T(G)^*)^{\frac{1}{2}}\text{ and } \cld_{T(G)^*}:=\overline{\ran}\,D_{T(G)^*}.
\]
Our first lemma is the $q$-commuting version of Fuglede-Putnam's theorem \cite{Fu} for commuting operators and proof of it is a simple application of the same theorem. The lemma is as follows.
\begin{lemma}\label{fug-put}
If $N$ is normal operator on a Hilbert space $\mathcal{H}$ and $X:\mathcal{H}\to \mathcal{H}$ is an operator such that $XN=qNX$, where $q$ is a complex number, then $XN^*=\overline{q}N^*X$.
\end{lemma}
\begin{proof}
Consider 
\[
A=\begin{bmatrix}
0 & 0\\
X & 0
\end{bmatrix} \text{ and } B=\begin{bmatrix}
N & 0\\
0 & qN
\end{bmatrix}.
\]
Clearly, $B$ is normal operator on $\mathcal{H}\oplus \mathcal{H}$ and $AB=BA$. Then by Fuglede-Putnam's theorem, $AB^*=B^*A$ and hence $XN^*=\overline{q}N^*X$.
\end{proof}
Our next lemma in this sequel is following.
\begin{lemma}\label{coisometry}
Let $G=\{m_1,\ldots,m_r\}\subset\{1,\ldots,n\}$ and $\overline{G}=\{1,\ldots,n\}\smallsetminus G$. Let $T=(T_1,\ldots,T_n)\in\T^{q,n}(\clh)$ be such that $\mathbb{S}_r^{-1}(T(G),T(G)^*)\geq 0$.
Suppose $T_j$ is a co-isometry for all $j\in\overline{G}$. Then there exist a Hilbert space $\clh_G$  and an $n$-tuple of isometries $V=(V_1,\ldots,V_n)$ on $H^2_{\clh_G}(\D^r)$ such that $V$ is doubly $q$-commuting and 
\[
 \Pi_GT_{i}^*=V_i^*\Pi_G\ (1\le i\le n)
\]
where $\Pi_G: \clh\to H^2_{\clh_G}(\D^r)$ is a map such that
\[
\|\Pi_G h\|^2=\lim_{l\to\infty}\sum_{F\subset G}(-1)^{|F|}\|T^{*l}_Fh\|^2,
\]
for all $h\in\clh$. Moreover,
\[
 V_{m_i}=M_{z_{m_i}}R_{[q]_{m_i}} (1\leq i\leq r) \text{ and } V_j=(I\otimes U_j)R_{[q]_j}^2\ (j\in \overline{G})
\]
for some $q$-commuting unitaries $U_j$ on $\clh_G$. (For this lemma we fix that $\bm z=(z_{m_1},\ldots,z_{m_r})\in\D^r$).
\end{lemma}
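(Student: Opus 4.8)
The plan is to treat the indices in $G$ and in $\overline G$ separately: the coordinates in $G$ will come from the subtuple dilation of Theorem \ref{dilation for szego tuple}, while the coordinates in $\overline G$ will be built as unitaries manufactured from the co-isometry hypothesis. First I would apply Theorem \ref{dilation for szego tuple} to the subtuple $T(G)=(T_{m_1},\ldots,T_{m_r})\in\T^{q,r}(\clh)$ (with $q$ restricted to $G\times G$), which satisfies Szeg\"o positivity by assumption. This produces a map $\Pi_G\colon\clh\to H^2_{\cld_{T(G)^*}}(\D^r)$ with $\Pi_G T_{m_i}^*=(M_{z_{m_i}}R_{[q]_{m_i}})^*\Pi_G$ for $1\le i\le r$ and the stated norm identity $\|\Pi_G h\|^2=\lim_{l\to\infty}\sum_{F\subset G}(-1)^{|F|}\|T_F^{*l}h\|^2$, which already fixes $V_{m_i}=M_{z_{m_i}}R_{[q]_{m_i}}$.

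For $j\in\overline G$ I would exploit $T_jT_j^*=I$. Writing $T_jT_F=\lambda_F T_FT_j$ with the unimodular scalar $\lambda_F=\prod_{m\in F}q(j,m)$ for $F\subset G$, a one-line computation gives $T_jT_FT_F^*T_j^*=|\lambda_F|^2T_F(T_jT_j^*)T_F^*=T_FT_F^*$; summing over $F\subset G$ yields $T_jD_{T(G)^*}^2T_j^*=D_{T(G)^*}^2$, hence $\|D_{T(G)^*}T_j^*x\|=\|D_{T(G)^*}x\|$ for every $x\in\clh$. Thus $S_j(D_{T(G)^*}x):=D_{T(G)^*}T_j^*x$ is a well-defined isometry on $\cld_{T(G)^*}$, and from $T_{m_i}^*T_j^*=\overline{q(j,m_i)}\,T_j^*T_{m_i}^*$ (and $T_i^*T_j^*=q(i,j)T_j^*T_i^*$ for $i,j\in\overline G$) one checks that $\{S_j\}_{j\in\overline G}$ is a $q$-commuting family of isometries, i.e. $S_iS_j=q(i,j)S_jS_i$.

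Next I would enlarge the coefficient space. Invoking a simultaneous unitary extension for $q$-commuting isometries (an It\^o-type extension in the $q$-commuting setting), I obtain a Hilbert space $\clh_G\supseteq\cld_{T(G)^*}$ and $q$-commuting unitaries $W_j$ on $\clh_G$ with $\cld_{T(G)^*}$ invariant and $W_j|_{\cld_{T(G)^*}}=S_j$; I then set $U_j:=W_j^*$. On $H^2_{\clh_G}(\D^r)$ I define $V_{m_i}=M_{z_{m_i}}R_{[q]_{m_i}}$ and $V_j=(I\otimes U_j)R_{[q]_j}^2$. Since $\Pi_G$ takes values in $H^2_{\cld_{T(G)^*}}(\D^r)\subseteq H^2_{\clh_G}(\D^r)$ and the rotational shifts leave the coefficient space alone, the identities $V_{m_i}^*\Pi_G=\Pi_G T_{m_i}^*$ persist verbatim. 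For $j\in\overline G$, pushing $T_j^*$ through $T(G)^{*\bm k}$ produces exactly the factor $\prod_i\overline{q(j,m_i)}^{k_i}$, which is the scalar carried by $R_{[q]_j}^{-2}$ on $\z^{\bm k}$; combined with $W_j|_{\cld_{T(G)^*}}=S_j$ this gives $V_j^*\Pi_G=R_{[q]_j}^{-2}(I\otimes W_j)\Pi_G=\Pi_G T_j^*$.

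Finally I would verify that $V$ is doubly $q$-commuting. The subfamily $\{V_{m_i}\}$ consists of rotational shifts, doubly $q$-commuting by Proposition \ref{properties}(iv); the $V_j$ are $q$-commuting unitaries (inherited from the $U_j$), and $q$-commuting unitaries are automatically doubly $q$-commuting. For a mixed pair I would use the tensor splitting $V_{m_i}=(M_{z_{m_i}}R_{[q]_{m_i}})\otimes I$, $V_j=R_{[q]_j}^2\otimes U_j$ together with $R_{[q]_j}^2M_{z_{m_i}}=q(j,m_i)M_{z_{m_i}}R_{[q]_j}^2$ and the mutual commuting of the rotations to obtain $V_{m_i}V_j=q(m_i,j)V_jV_{m_i}$ and its adjoint counterpart. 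I expect the genuine difficulty to be the unitary extension in the third paragraph: the induced isometries $S_j$ need \emph{not} be surjective (if, say, some $T_j$ is a backward shift then the corresponding $S_j$ is a proper, non-unitary isometry), so $\cld_{T(G)^*}$ itself need not support the required unitaries and enlarging it to $\clh_G$ is unavoidable; moreover the extensions must be carried out simultaneously so as to preserve the $q$-commutation relations among the $U_j$. Everything else is routine.
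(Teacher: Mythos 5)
Your proposal is correct and follows essentially the same route as the paper's proof: dilate the subtuple $T(G)$ via Theorem \ref{dilation for szego tuple}, turn the co-isometry hypothesis into isometries $S_j$ on $\cld_{T(G)^*}$ via the identity $T_jD^2_{T(G)^*}T_j^*=D^2_{T(G)^*}$ (the paper phrases this through Douglas' lemma, you verify the isometry directly), extend them simultaneously to $q$-commuting unitaries on a larger space $\clh_G$ (the paper cites Theorem 6.1 of \cite{BS} for exactly the extension you invoke), and define $V_j=(I\otimes U_j)R^2_{[q]_j}$ with the same cancellation between the commutation factor and the rotation scalar. The only cosmetic difference is in the doubly $q$-commuting verification, where you use the tensor splitting and the elementary fact that $q$-commutation with a unitary self-improves, while the paper runs the coefficient computation and appeals to its Fuglede--Putnam Lemma \ref{fug-put}; both are sound.
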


\begin{proof}
First we observe that, for each $j\in\overline{G}$,
\begin{align*}
    T_jD^2_{T(G)^*}T_j^*=\sum_{F\subset G}(-1)^{|F|}T_jT_FT_F^*T_j^*
    =\sum_{F\subset G}(-1)^{|F|}\Big(\prod_{i\in F}q(j,i)\overline{q(j,i)}\Big)T_FT_jT_j^*T_F^*
    =D^2_{T(G)^*}.
\end{align*}
	So, by Douglas' lemma (see \cite{Douglas}), there exist co-isometries $S_j:\cld_{T(G)^*}\to\cld_{T(G)^*}$ $(j\in\overline{G})$ such that 
	\begin{equation}\label{U_j}
	  	S_j^*D_{T(G)^*}h=D_{T(G)^*}T_j^*h,  
	\end{equation}
for all $h\in\clh$. It follows from the $q$-commutativity of the sub-tuple $T(G)$ and the above equation that $\{S_j: j\in\overline{G}\}$ is a tuple of $q$-commuting co-isometries on $\cld_{T(G)^*}$. Since, due to Theorem 6.1 of \cite{BS}, every tuple of $q$-commuting isometries extends to a tuple of $q$-commuting unitaries, the tuple $\{S_j: j\in\overline{G}\}$ co-extends to a tuple of $q$-commuting unitaries, say, to $\{U_j:j\in \overline{G}\}$ on some Hilbert space $\clh_G$.

On the other hand, since the $r$-tuple of $q$-commuting contractions $\{T_i:i\in G\}$ satisfies Szeg\"o positivity, apply Theorem \ref{dilation for szego tuple} to get an operator $\Pi_G:\clh\to H^2_{\cld_{T(G)^*}}(\D^r)\subset H^2_{\clh_G}(\D^r)$ such that
\[
\Pi_G h(\z)= \sum_{\bm k\in\Z^r}\Big(\prod_{1\leq t<p\leq r}q(m_t,m_p)^{\frac{k_{m_t}k_{m_p}}{2}}\Big)D_{T(G)^*}T(G)^{*\bm k}h\z^{\bm k}
\]
with
\[
\|\Pi_G h\|^2=\lim_{l\to\infty}\sum_{F\subset G}(-1)^{|F|}\|T^{*l}_Fh\|^2,
\]
for all $h\in\clh$, and
\[
	\Pi_GT_{m_i}^*=\big(M_{z_{m_i}}R_{[q]_{m_i}}\big)^*\Pi_G
\]
for all $i=1\ldots,r$. Now, for all $j\in\overline{G}$,
\begingroup
\allowdisplaybreaks
\begin{align*}
    &R_{[q]_j}^{*2}(I\otimes U_j)^*\Pi_G h(\z)\\
    =&R_{[q]_j}^{*2} \sum_{\bm k\in\Z^r}\Big(\prod_{1\leq t<p\leq r}q(m_t,m_p)^{\frac{k_{m_t}k_{m_p}}{2}}\Big)D_{T(G)^*}T_j^*T(G)^{*\bm k}h\z^{\bm k}\hspace{4cm}[\text{using \eqref{U_j}}]\\
    =& R_{[q]_j}^{*2} \sum_{\bm k\in\Z^r}\Big(\prod_{1\leq t<p\leq r}q(m_t,m_p)^{\frac{k_{m_t}k_{m_p}}{2}}\Big)\Big(\prod_{1\leq t\leq r}\overline{q(m_t,j)}^{k_{m_t}}\Big)D_{T(G)^*}T(G)^{*\bm k}T_j^*h\z^{\bm k}\\
    =& \sum_{\bm k\in\Z^r}\Big(\prod_{1\leq t<p\leq r}q(m_t,m_p)^{\frac{k_{m_t}k_{m_p}}{2}}\Big)\Big(\prod_{1\leq t\leq r}\overline{q(m_t,j)}^{k_{m_t}}\Big)\Big(\prod_{1\leq t\leq r}\overline{q(j,m_t)}^{k_{m_t}}\Big)D_{T(G)^*}T(G)^{*\bm k}T_j^*h\z^{\bm k}\\
    =& \sum_{\bm k\in\Z^r}\Big(\prod_{1\leq t<p\leq r}q(m_t,m_p)^{\frac{k_{m_t}k_{m_p}}{2}}\Big)D_{T(G)^*}T(G)^{*\bm k}T_j^*h\z^{\bm k}\\
    =& \Pi_G T_j^* h(\z)
\end{align*}
\endgroup
So,
\[
\Pi_GT_{j}^*=\big((I\otimes U_j)R^2_{[q]_j}\big)^*\Pi_G
\] 
for all $j\in\overline{G}$. Now, for all $i\in G$ and $j\in\overline{G}$,
\begingroup
\allowdisplaybreaks
\begin{align*}
    (M_{z_i}R_{[q]_i})&\big((I\otimes U_j)R^2_{[q]_j}\big)\Big(\sum_{\bm k\in\Z^r}a_{\bm k}\z^{\bm k}\Big)\\
    =&(M_{z_i}R_{[q]_i})\Big(\sum_{\bm k\in\Z^r}U_ja_{\bm k}[q]_j^{2\bm k}\z^{\bm k}\Big)\\
    =&\sum_{\bm k\in\Z^r}U_ja_{\bm k}[q]_j^{2\bm k}[q]_i^{\bm k}\z^{{\bm k}+e_i}\\
    =&\sum_{\bm k\in\Z^r}U_ja_{\bm k}\overline{q(j,i)}[q^2]_j^{{\bm k}+e_i}[q]_i^{\bm k}\z^{{\bm k}+e_i}\\
    =&\big((I\otimes U_j)R^2_{[q]_j}\big)\sum_{\bm k\in\Z^r}a_{\bm k}q(i,j)[q]_i^{\bm k}\z^{{\bm k}+e_i}\\
    =& q(i,j)\big((I\otimes U_j)R^2_{[q]_j}\big)(M_{z_i}R_{[q]_i})\Big(\sum_{\bm k\in\Z^r}a_{\bm k}\z^{\bm k}\Big).
\end{align*}
\endgroup
So, for each $i\in G$ and $j\in\overline{G}$, denoting 
\[
V_i=M_{z_i}R_{[q]_i}\text{ and } V_j=(I\otimes U_j)R^2_{[q]_j}
\]
we see from the above calculation that the pair $(V_i,V_j)$ is $q(i,j)$-commuting. Since $R_{[q]_j}$ is unitary, $V_j$ is also a unitary for all $j\in\overline{G}$. Hence, by Lemma \ref{fug-put}, $(V_i,V_j)$ is doubly $q(i,j)$-commuting. Also, for all $j_1,j_2\in\overline{G}$, 
\begingroup
\allowdisplaybreaks
\begin{align*}
    \big((I\otimes U_{j_1})R^2_{[q]_{j_1}}\big)&\big((I\otimes U_{j_2})R^2_{[q]_{j_2}}\big)\Big(\sum_{\bm k\in\Z^r}a_{\bm k}\z^{\bm k}\Big)\\
    =&\big((I\otimes U_{j_1})R^2_{[q]_{j_1}}\big)\Big(\sum_{\bm k\in\Z^r}U_{j_2}a_{\bm k}[q]_{j_2}^{2\bm k}\z^{\bm k}\Big)\\
    =&\sum_{\bm k\in\Z^r}U_{j_1}U_{j_2}a_{\bm k}[q]_{j_1}^{2\bm k}[q]_{j_2}^{2\bm k}\z^{\bm k}\\
    =& q(j_1,j_2)\sum_{\bm k\in\Z^r}U_{j_2}U_{j_1}a_{\bm k}[q]_{j_1}^{2\bm k}[q]_{j_2}^{2\bm k}\z^{\bm k}\\
    =& q(j_1,j_2)\big((I\otimes U_{j_1})R^2_{[q]_{j_1}}\big)\big((I\otimes U_{j_2})R^2_{[q]_{j_2}}\big)\Big(\sum_{\bm k\in\Z^r}a_{\bm k}\z^{\bm k}\Big).
\end{align*}
\endgroup
So, again by Lemma \ref{fug-put}, $(V_{j_1},V_{j_2})$ is doubly $q(j_1,j_2)$-commuting for all $j_1,j_2\in\overline{G}$. Also, it follows from Proposition \ref{properties} that $(V_{i_1},V_{i_2})$ is doubly $q(i_1,i_2)$-commuting, for all $i_1,i_2\in G$. Combining all these we conclude that the tuple $(V_1,\ldots,V_n)$ is doubly $q$-commuting.
\end{proof}

We need one more lemma which describes a canonical way to construct co-isometries out of $q$-commuting contractions. 
\begin{lemma}\label{tilde}
Let $T=(T_1,\ldots,T_n)\in\mathfrak{B}^{q,n}(\clh)$. Let $G\subset\{1,\ldots,n\}$ and $\overline{G}=\{1,\dots,n\}\smallsetminus G$. Then there exist a positive operator $X:\clh \to\clh $ and contractions $\tilde{T}_j:\overline{\ran}\,X\to\overline{\ran}\,X$ $(1\le j\le n) $ defined by 
	\[
	\tilde{T}_j^*Xh=X{T}_j^*h,\quad(h\in\clh)
	\]
such that $\tilde{T}_j$ is a co-isometry for all $j\in\overline{G}$.
\end{lemma}
\begin{proof}
Since $T$ is a tuple of contractions, strong operator limit (SOT) of $T_{\overline{G}}^{m}T_{\overline{G}}^{*m}$ exists as $m\to\infty$.
Set
	\[	X^2:=\text{SOT-}\lim_{m\to\infty}T_{\overline{G}}^{m}T_{\overline{G}}^{*m}.
	\]
Since $T_j$ is a contraction, it is easy to see that 
	\[
	T_jX^2T_j^*= \text{SOT-}\lim_{m\to\infty}\Big(\prod_{i\in \overline{G}}q(j,i)^{\frac{m}{2}}\overline{q(j,i)}^{\frac{m}{2}}\Big)T_{\overline{G}}^{m}T_jT_j^*T_{\overline{G}}^{*m} \leq X^2,
	\]
 for all $j=1\ldots,n$. Consequently by Douglas' lemma, for each $1\le j\le n$, there exists a contraction  $\tilde{T}_j:\overline{\mbox{ran}}\,X\to\overline{\ran}\,X$ such that
	\[
	\tilde{T}_j^*Xh=X{T}_j^*h \quad(h\in\clh).
	\]
	Moreover, for all $j\in\overline{G}$, 
	\begin{align*}
	X^2\geq T_jX^2T_j^*=&\text{SOT-}\lim_{m\to\infty}\Big(\prod_{i\in \overline{G}}q(j,i)^{\frac{m}{2}}\overline{q(j,i)}^{\frac{m}{2}}\Big)T_{\overline{G}}^{m}T_jT_j^*T_{\overline{G}}^{*m}\\
	\geq& \text{SOT-}\lim_{m\to\infty}T_{\overline{G}}^{(m+1)}T_{\overline{G}}^{*(m+1)}=X^2,
	\end{align*}
	that is,
	\[
	T_jX^2T_j^*=X^2.
	\]
Hence $\tilde{T}_j$ is a co-isometry for all $j\in\overline{G}$. This completes the proof.
\end{proof}

Combining the above lemmas we now find co-extensions for 
$n$-tuples in $\mathfrak{B}^{q,n}(\clh)$.
\begin{thm}\label{Bre-dil}
Let $T=(T_1,\dots,T_n)\in\mathfrak{B}^{q,n}(\clh)$. Then, for each $G\subset\{1,\ldots,n\}$, there exist a Hilbert space $\clh_G$, doubly $q$-commuting isometries $V_{G,1},\ldots,V_{G,n}$ on $H^2_{\clh_G}(\D^{|G|})$ 
 and an isometry $\Pi:\clh\to\bigoplus_{G}H^2_{\clh_G}(\D^{|G|})$ such that 
	\[
	\Pi T_j^*=\big(\bigoplus_{G}V_{G,j}^*\big)\Pi, \ (1\le j\le n)
	\]
where, by convention $H^2_{\clh_{\emptyset}}(\D^{|\emptyset|}):=\clh_{\emptyset}$. Moreover, for $i\in G$, $V_{G,i}$'s are the rotational shifts, and for $j\in\overline{G}$, $V_{G,j}$'s are some unitaries on the Hardy space $H^2_{\clh_G}(\D^{|G|})$.
\end{thm}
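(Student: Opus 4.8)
The plan is to handle each subset $G\subset\{1,\ldots,n\}$ separately, producing a single co-extension of $T$ onto $H^2_{\clh_G}(\D^{|G|})$, and then to glue these together into one isometry $\Pi$ by showing that the pieces telescope over the subset lattice. So fix $G$. First I would apply Lemma \ref{tilde} to $T$ and $G$, obtaining a positive operator $X_G$ on $\clh$ and a tuple of contractions $\tilde T^{(G)}=(\tilde T_1^{(G)},\ldots,\tilde T_n^{(G)})$ on $\clh_G':=\overline{\ran}\,X_G$ with $\tilde T_j^{(G)*}X_G=X_GT_j^*$ and $\tilde T_j^{(G)}$ a co-isometry for $j\in\overline{G}$. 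To feed this into Lemma \ref{coisometry} I must check its hypotheses: that $\tilde T^{(G)}\in\T^{q,n}(\clh_G')$ (the defining relation $\tilde T_j^{(G)*}X_G=X_GT_j^*$ transports $q$-commutativity from $T$ to the compressions, using density of $\ran X_G$), and that the sub-tuple $\tilde T^{(G)}(G)$ satisfies Szeg\"o positivity. The latter I would obtain from
\begin{align*}
\big\langle \mathbb{S}_{|G|}^{-1}\big(\tilde T^{(G)}(G),\tilde T^{(G)}(G)^*\big)X_Gh,X_Gh\big\rangle
&=\sum_{F\subset G}(-1)^{|F|}\|X_GT_F^*h\|^2\\
&=\lim_{m\to\infty}\big\langle T_{\overline{G}}^m\,\mathbb{S}_{|G|}^{-1}\big(T(G),T(G)^*\big)\,T_{\overline{G}}^{*m}h,h\big\rangle\ge 0,
\end{align*}
where I use $X_G^2=\text{SOT-}\lim_m T_{\overline{G}}^mT_{\overline{G}}^{*m}$, pull $T_F,T_F^*$ through $T_{\overline{G}}^m,T_{\overline{G}}^{*m}$ via $q$-commutativity (the unimodular scalars cancelling), and invoke Brehmer positivity of $T$. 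Lemma \ref{coisometry} then supplies $\clh_G$, a map $\Pi_G:\clh_G'\to H^2_{\clh_G}(\D^{|G|})$ with $\Pi_G\tilde T_i^{(G)*}=V_{G,i}^*\Pi_G$, and doubly $q$-commuting isometries $V_{G,1},\ldots,V_{G,n}$ (rotational shifts for $i\in G$, unitaries for $j\in\overline{G}$), together with the norm formula $\|\Pi_Gg\|^2=\lim_l\sum_{F\subset G}(-1)^{|F|}\|\tilde T_F^{(G)*l}g\|^2$.

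Next I would set $\Lambda_G:=\Pi_GX_G$. Since $X_GT_j^*=\tilde T_j^{(G)*}X_G$, the intertwining from Lemma \ref{coisometry} gives $\Lambda_GT_j^*=\Pi_G\tilde T_j^{(G)*}X_G=V_{G,j}^*\Lambda_G$ for every $j$. Taking $\Pi:=\bigoplus_G\Lambda_G$, these identities at once yield $\Pi T_j^*=\big(\bigoplus_GV_{G,j}^*\big)\Pi$, so the whole content that remains is to prove that $\Pi$ is an isometry, i.e. $\sum_G\|\Lambda_Gh\|^2=\|h\|^2$ for all $h\in\clh$.

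This last identity is the crux. I would introduce $c(\bm p):=\|T_1^{*p_1}\cdots T_n^{*p_n}h\|^2$ for $\bm p\in\Z^n$; by $q$-commutativity it is independent of the order of the factors, and since each $T_i^*$ is a contraction it is non-increasing in every coordinate and bounded in $[0,\|h\|^2]$. For $S\subset\{1,\ldots,n\}$ put $L_S:=\lim_{m_i\to\infty\,(i\in S)}c\big(\sum_{i\in S}m_ie_i\big)$, which exists by monotonicity. Starting from $\|\Lambda_Gh\|^2=\lim_l\sum_{F\subset G}(-1)^{|F|}\|X_GT_F^{*l}h\|^2$ and using $\|X_Gv\|^2=\lim_m\|T_{\overline{G}}^{*m}v\|^2$, each summand becomes an iterated limit of $c$ along the disjoint index sets $F$ and $\overline{G}$; coordinatewise monotonicity and boundedness let me collapse the iterated limit to the joint one, so that $\|\Lambda_Gh\|^2=\sum_{F\subset G}(-1)^{|F|}L_{\overline{G}\cup F}$. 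Finally, $(G,F)\mapsto(\overline{G}\cup F,F)$ is a bijection from $\{(G,F):F\subset G\}$ onto $\{(S,F):F\subset S\}$, whence
\[
\sum_G\|\Lambda_Gh\|^2=\sum_{S\subset\{1,\ldots,n\}}\Big(\sum_{F\subset S}(-1)^{|F|}\Big)L_S=L_\emptyset=\|h\|^2,
\]
because $\sum_{F\subset S}(-1)^{|F|}=0$ unless $S=\emptyset$.

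I expect the main obstacle to be precisely this isometry identity: the honest work is in justifying the interchange of the nested limits in the $F$- and $\overline{G}$-directions (which is exactly where the coordinatewise monotonicity and boundedness of $c$ are needed), and then in recognising the telescoping through the subset-lattice reindexing and the binomial cancellation. The per-$G$ verifications are routine consequences of Lemmas \ref{fug-put}, \ref{coisometry} and \ref{tilde}.
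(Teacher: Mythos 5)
Your proposal is correct and follows essentially the same route as the paper: Lemma \ref{tilde} to produce $X_G$ and the co-isometric compressions, the same Brehmer-positivity computation to verify the Szeg\"o hypothesis of Lemma \ref{coisometry}, the composition $\Pi_G X_G$, and the same subset-lattice reindexing with the binomial cancellation $\sum_{F\subset S}(-1)^{|F|}=0$. The only difference is presentational: where the paper silently replaces the iterated limit $\lim_l\lim_m\|T_{\overline{G}}^{*m}T_F^{*l}h\|^2$ by the diagonal one $\lim_l\|T_{\overline{G}}^{*l}T_F^{*l}h\|^2$, you justify the same collapse explicitly via the joint limits $L_S$ and coordinatewise monotonicity, which is a slightly more careful write-up of the identical argument.
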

\begin{proof}
 Let us fix $G=\{m_1,\ldots,m_{|G|}\}\subset\{1,\ldots,n\}$ and set $\overline{G}=\{1,\ldots,n\}\smallsetminus G$. Let $X_{G}:\clh\to\clh$ be the positive operator defined by
	\[
	X_{G}^2:=\text{SOT-}\lim_{m\to\infty}T_{\overline{G}}^{m}T_{\overline{G}}^{*m}.
	\]
Then, by Lemma~\ref{tilde}, we have a contraction
 $S_j:\overline{\ran}\,X_G \to \overline{\ran}\,X_G$ defined by 
\[
S_j^*X_Gh=X_G{T}_j^*h,\quad(h\in\clh)
\]
for all $j=1,\dots,n$ so that $S_j$ is a co-isometry 
for all $j\in\overline{G}$. Since, $(T_1,\ldots,T_n)$ is $q$-commuting, it is clear from the definition that $S:=(S_1,\ldots,S_n)$ is also a $q$-commuting tuple. Also, for all $h\in \clh$
\begingroup
\allowdisplaybreaks
\begin{align*}
    \Big\langle\mathbb{S}_{|G|}^{-1}(S(G),S(G)^*)& X_G h,X_G h \Big\rangle \\
    =&\left\langle\sum_{F\subset G}(-1)^{|F|}T_FX_GX_G^*T_F^*h,h\right\rangle\\
    =&\left\langle\sum_{F\subset G}(-1)^{|F|}T_F\Big(\text{SOT-}\lim_{m\to\infty}T_{\overline{G}}^{m}T_{\overline{G}}^{*m}\Big)T_F^*h,h\right\rangle\\
    =&\left\langle\sum_{F\subset G}(-1)^{|F|}\Big(\prod_{j\in\overline{G}, i\in G}q(i,j)^{\frac{m}{2}}\overline{q(i,j)}^{\frac{m}{2}}\Big)\text{SOT-}\lim_{m\to\infty}T_{\overline{G}}^{m}T_FT_F^*T_{\overline{G}}^{*m}h,h\right\rangle\\
    =&\lim_{m\to\infty}\left\langle\Big(\sum_{F\subset G}(-1)^{|F|}T_FT_F^*\Big)T_{\overline{G}}^{*m}h,T_{\overline{G}}^{*m}h\right\rangle\geq 0.
    \end{align*}
    \endgroup
This implies, $\mathbb{S}_{|G|}^{-1}(S(G),S(G)^*)\geq 0$. Apply Lemma~\ref{coisometry} to get a Hilbert space $\clh_G$, a map $\tilde{\Pi}_G:\overline{\mbox{ran}}\,X_G\to H^2_{\clh_G}(\D^{|G|})$ and $n$-tuple of doubly $q$-commuting isometries
	$V=(V_{G,1},\ldots,V_{G,n})$ on $H^2_{\clh_{G}}(\D^{|G|})$ such that
	 \[
\|\tilde{\Pi}_G h\|^2=\lim_{l\to\infty}\sum_{F\subset G}(-1)^{|F|}\|S^{*l}_Fh\|^2,
\]
for all $h\in\clh$, and
	\[
\tilde{\Pi}_GS_i^*=V_{G,i}^*\tilde{\Pi}_G,\
	(1\le i\le n)
	\]
where \[
V_{G,m_i}=M_{z_{m_i}}R_{[q]_{m_i}} (1\leq i\leq |G|)\ \text{and } V_j=(I\otimes U_j)R_{[q^2]_j}\ (j\in \overline{G}).
\]
Now, using the identity $S_j^*X_G=X_GT_j^*$ and setting \[\Pi_G:=\tilde{\Pi}_GX_G,\] we have that 
	\begin{equation}\label{int1}
	\Pi_GT_i^*=V_{G,i}^*\Pi_G,\
	(1\le i\le n)
	\end{equation}
where $\Pi_G :\clh \to H^2_{\clh_G}(\D^{|G|})$ 
is a contraction with 
\begin{align*}
	\|\Pi_Gh\|^2 &=\lim_{l\to\infty}\sum_{F\subset G}(-1)^{|F|}\|S^{*l}_FX_Gh\|^2=\lim_{l\to\infty}\sum_{F\subset G}(-1)^{|F|}\|X_GT^{*l}_Fh\|^2,
\end{align*}
for all $h\in\clh$. Take $\Pi=\bigoplus_G\Pi_G$.
Then it follows from (\ref{int1}) that 
\[
\Pi T_j^*=(\bigoplus_{G}V_{G,j}^* )\Pi
\]
for all $j=1,\dots,n$. To show $\Pi$ is an isometry, for any $h\in\clh$, we compute
\begingroup
\allowdisplaybreaks
\begin{align*}
	\|\Pi h\|^2=&\sum_{G\subset\{1,\ldots,n\}}\|\Pi_G h\|^2\\
	=&\sum_{G\subset\{1,\ldots,n\}}\lim_{l\to\infty}\sum_{F\subset G}(-1)^{|F|}\|X_GT^{*l}_Fh\|^2\\
	=&\sum_{G\subset\{1,\ldots,n\}}\lim_{l\to\infty}\sum_{F\subset G}(-1)^{|F|}\|T_{\overline{G}}^{*l}T^{*l}_Fh\|^2\\
	=&\lim_{l\to\infty}\sum_{E\subset\{1,\ldots,n\}}\|T^{*l}_Eh\|^2\sum_{F\subset E}(-1)^{|F|}.
	\end{align*}
	\endgroup
	If $E\neq\emptyset$ one can see that $\sum_{F\subset E}(-1)^{|F|}=0$. So $\|\Pi h\|^2=\|h\|^2$ for all $h\in\clh$ and hence $\Pi$ is an isometry. The moreover part is now clear from the construction 
of $V_{G,j}$'s. This completes the proof.
\end{proof}

\begin{remark}
\begin{enumerate}
\item In the above theorem, the dilating tuple $V=\Big(\bigoplus_{G}V_{G,1},\ldots,\bigoplus_{G}V_{G,n}\Big)$ is doubly $q$-commuting so are all the sub-tuples of $V$. Proposition \ref{doublyszego} tells us that $V$ satisfy Brehmer positivity and hence $V$ belongs to the class $\mathfrak{B}^{q,n}\Big(\bigoplus_{G}H^2_{\clh_G}\big(\D^{|G|}\big)\Big)$.
    
    \item The class $\mathfrak{B}^{q,n}(\clh)$ enlarges the class discussed in Theorem \ref{pure-dil} and this is for the following reason: Given a pure tuple $T=(T_1,\ldots,T_n)\in\T^{q,n}(\clh)$, a subset $G\subsetneq \{1,\ldots,n\}$, and $m\in\overline{G}$, observe that
    \begin{align*}
    \quad\,\,\,\mathbb{S}_{|G|}^{-1}(T(G),T(G)^*)=&\sum_{p=0}^{\infty}T_m^p\Big(\mathbb{S}_{|G|}^{-1}(T(G),T(G)^*)-T_m\big(\mathbb{S}_{|G|}^{-1}(T(G),T(G)^*)\big)T_m^*\Big)T_m^{*p}\\
   % &\hspace{6cm}[\text{Since $T$ is pure the sum converges}]\\
    =&\sum_{p=0}^{\infty}T_m^p\Big(\mathbb{S}_{|G|+1}^{-1}\big(T(G\cup\{m\}),T(G\cup\{m\})^*\big)\Big)T_m^{*p}.
   \end{align*}
This shows, by induction, that if $T\in\T^{q,n}(\clh)$ is a pure tuple satisfying Szeg\"o positivity, every sub-tuple of $T$ satisfies Szeg\"o positivity and hence $T\in\mathfrak{B}^{q,n}(\clh)$. On the other hand, if $T$ is also a pure tuple in $\mathfrak{B}^{q,n}(\clh)$ then the positive operator $X_G$, defined in the proof of Theorem \ref{Bre-dil}, is $0$ for all $G\subsetneq \{1,\dots,n\}$ and 
 $X_{G}=I_{\clh}$ for $G=\{1,\dots,n\}$. This implies $\Pi_G=0$
 for all $G\subsetneq\{1,\dots,n\}$, and for $G=\{1,\dots,n\}$, $\Pi_G$ is an isometry and $\clh_G=\cld_{T^*}$. Then it follows from Theorem \ref{Bre-dil} that the $q$-commuting tuple of isometries 
 $ (M_{z_1}R_{[q_1]},\ldots, M_{z_n}R_{[q_n]}) $ on 
 $H^2_{\cld_{T^*}}(\D^n)$ is a dilation of $T$. Thus isometric dilation for the larger class in Theorem \ref{Bre-dil} is a generalization the dilation result of Theorem \ref{pure-dil}.
\end{enumerate}
\end{remark}

%\pagebreak

\vspace{0.1in} \noindent\textbf{Acknowledgement:} First author's research was carried out at the Ben-Gurion University and the research is partially supported by the Israel Science Foundation grant number 2196-20. The second named author has been supported by the institute post-doctoral fellowship of HRI, Prayagraj.

%\vspace{0.1in} \noindent\textbf{Data Availability:} Data sharing is not applicable to this article as no data sets were generated or analyzed during the current study.

\bibliographystyle{plain}

\end{document}